\newcommand{\p}{\partial}
\newcommand{\R}{\mathbb{R}}
\newcommand{\cliff}{\mathfrak{m}}
\newcommand{\D}{\slashed{D}}
\newcommand{\dd}{\mathop{}\!\mathrm{d}}
\newcommand{\eps}{\varepsilon}
\newcommand{\snabla}{\slashed{\nabla}} 
\newcommand{\sph}{\mathbb{S}}
\newcommand{\Abracket}[1]{\left<#1\right>} 
\newcommand{\parenthesis}[1]{\left(#1\right)} 
\newcommand{\braces}[1]{\left\{#1\right\}} 
\newcommand{\bB}{\mathbb{B}}
\newcommand{\bZ}{\mathbb{Z}}
\newcommand{\cC}{\mathcal{C}}
\newcommand{\cD}{\mathcal{D}}
\newcommand{\sL}{\mathscr{L}}
\newcommand{\sN}{\mathscr{N}}
\DeclareMathOperator{\dv}{\dd{vol}}
\DeclareMathOperator{\Eigen}{Eigen}
\DeclareMathOperator{\End}{End}
\DeclareMathOperator{\Hess}{Hess}
\DeclareMathOperator{\id}{Id}
\DeclareMathOperator{\Spect}{Spect}
\DeclareMathOperator{\Spin}{Spin}
\DeclareMathOperator{\Vol}{Vol}
\newtheorem{thm}{Theorem}[section]
\newtheorem{lemma}[thm]{Lemma}
\newtheorem{prop}[thm]{Proposition}
\newtheorem{cor}[thm]{Corollary}
\newtheorem{rmk}[thm]{Remark}
\title[Super sinh-Gordon equations]{Min-max solutions for Super sinh-Gordon equations \\on compact surfaces}
\author[A. Jevnikar]{Aleks Jevnikar}
\address{Aleks Jevnikar, Department of Mathematics, Computer Science and Physics, University of Udine, Via delle Scienze 206, 33100 Udine, Italy.}
\email{aleks.jevnikar@uniud.it}
\author[A. Malchiodi]{Andrea Malchiodi}
\address{Andrea Malchiodi, Scuola Normale Superiore, Piazza dei Cavalieri 7, 56126 Pisa, Italy}
\email{andrea.malchiodi@sns.it}
\author[R. Wu]{Ruijun Wu}
\address{Ruijun Wu, SISSA, Via Bonomea, 265, 34136 Trieste, Italy}
\email{ruijun.wu@sissa.it}
\begin{document}

\begin{abstract}
In the present paper we initiate the variational analysis of a super sinh-Gordon system on compact surfaces, yielding the first example of non-trivial solution of min-max type. The proof is based on a linking argument jointly with a suitably defined Nehari manifold and a careful analysis of Palais-Smale sequences. We complement this study with a multiplicity result exploiting the symmetry of the problem.  
\end{abstract}

\maketitle

{\footnotesize
\emph{Keywords}: super sinh-Gordon equations, existence results, min-max methods, multiplicity results.

\medskip

\emph{2010 MSC}: 58J05, 35A01, 58E05, 81Q60.}

\

\section{Introduction}

A general Lagrangian in supersymmetric string theory can be expressed as
\begin{align}\label{eq:Lagrangian-physics}
 \sL(v,\phi)
 =& \frac{1}{8\pi}|\nabla v|^2 - \frac{1}{\pi}\Abracket{\D\phi,\phi}
   -\frac{1}{4\pi}|\phi|^2 W''(v) + \frac{1}{32\pi}[W'(v)]^2,
\end{align}
see~\cite{ahn2002RG, pensot2003massless}, where~$W=W(v)$ is a superpotential,~$v$ is a scalar function and~$\D$ is the Dirac operator acting on spinors~$\phi$, see Section~\ref{sec:preliminaries} for precise definitions. For instance, the super Liouville equations, which have  attracted a great attention~\cite{super1,super2,jevnikar2020existence,jevnikar2020existence-sphere,jost1,jost2,jost3,jost4,super3}, are recovered by considering the potential~$W(v)= 8\pi \mu e^{bv}$. Here we will be concerned with the super sinh-Gordon equations (SShG) with potential 
\begin{align}\label{pot}
 W(v)=8\pi \mu\cosh(bv),
\end{align}
where~$\mu>0$ and~$b>0$ are physical parameters, see~\cite{ahn2002RG, pensot2003massless}. This model can be seen as a perturbation of the super Liouville equations with spontaneously broken supersymmetry, where the massless fermions plays the role of the goldstino. Under a suitable transformation, the model is equivalent to its imaginary coupling version: the super sine-Gordon equations (SSG) with ~$W(v)=8\pi\mu\cos(bv)$. Both (SSG) and (SShG) can be mapped into an affine Toda theory based on the twisted super Lie algebra~$C^{(2)}(2)$~\cite{chaichian1987superconformal}. By the choice made in~\eqref{pot}, we have
\begin{align}
 &W'(v)= 8\pi\mu b\sinh(bv), \\ 
 &W''(v)=8\pi\mu b^2\cosh(bv),
\end{align}
and hence 
\begin{align}
 \sL(v,\phi)
 =\frac{1}{8\pi}|\nabla v|^2-\frac{1}{\pi}\Abracket{\D\phi,\phi}
 -2\mu b^2\cosh(bv)|\phi|^2 
 +2\pi\mu^2 b^2 \sinh(bv)^2.
\end{align}
Some simplification occurs by introducing the variables 
\begin{align}
 &u\coloneqq bv, \\  
&\psi\coloneqq b\omega\cdot\phi
\end{align}
where~$\omega=\cliff(e_1)\cliff(e_2)$ denotes the Clifford multiplication of the two-dimensional real volume element (see Section~\ref{sec:preliminaries}). We have then 
\begin{align}
 &|\nabla u|^2 = b^2|\nabla v|^2, \quad |\psi|^2= b^2|\phi|^2, \\
 &\D\psi=b\D(\omega\cdot\phi)=-b \omega\cdot \D\phi, \quad 
 \Abracket{\D\psi,\psi}=- b^2\Abracket{\D\phi,\phi},
\end{align}
and 
\begin{align}
 8\pi b^2 \sL
 = |\nabla u|^2 +8\Abracket{\D\psi,\psi}-16\pi\mu b^2\cosh(u)|\psi|^2 +16\pi^2\mu^2 b^4 \sinh(u)^2. 
\end{align}
With~$\rho\equiv 2\pi\mu b^2\in \R$, we will consider the Lagrangian density
\begin{align}
 L_\rho(u,\psi)
 =|\nabla u|^2+8\Abracket{\D\psi,\psi}-8\rho\cosh(u)|\psi|^2 
 + 4\rho^2 \sinh(u)^2. 
\end{align}
The aim of this work is to obtain the existence of solutions of the corresponding Euler-Lagrange equations via variational methods.

\

From now on, let~$M$ be a compact Riemann surface with empty boundary and endowed with a smooth Riemannian metric~$g$. 
Fix a spin structure and let~$\Sigma M\equiv \Sigma_g M$ denote the associated spinor bundle over~$M$. 
We will consider the functional 
\begin{align}
 J_\rho \colon H^1(M)\times H^{\frac{1}{2}}(\Sigma M) \to \R
\end{align}
defined by the above Lagrangian~$L_\rho$, namely
\begin{align}\label{eq:action functional}
 J_\rho(u,\psi)
 = \int_M \left(  |\nabla u|^2+8\Abracket{\D\psi,\psi}-8\rho\cosh(u)|\psi|^2 
 + 4\rho^2 \sinh(u)^2 \right) \dv_g. 
\end{align}
The Euler--Lagrange equations are given by
\begin{equation}\label{eq:SShG-intro} \tag{SShG}
 \begin{cases}
  \Delta u= 2\rho^2 \sinh(2u)- 4\rho \sinh(u)|\psi|^2, \\
  \D\psi= \rho \cosh(u)\psi. 
 \end{cases}
\end{equation}
It is clear that any weak solution~$(u,\psi)\in H^1(M)\times H^{\frac{1}{2}}(\Sigma M)$ is indeed smooth, thanks to the Moser-Trudinger embedding and the regularity theory for Laplacian as well as Dirac's operators, see e.g.~\cite{ammann2003habilitation,aubin1998somenonlinear, jost2018regularity}. 
Here we are concerned with the existence issue of weak solutions of~\eqref{eq:SShG-intro}.

There are some easy solutions for~\eqref{eq:SShG-intro} which are essentially trivial. Clearly~$(0,0)$ is a solution. 
Moreover, if~$u=0$ and~$\rho=\lambda_k\in\Spect(\D_g)$, then any~$\psi\in \Eigen(\D_g;\lambda_k)$ gives a nonzero solution. 
In the general case, we prove the following existence result in the present work. 

\begin{thm}\label{thm:main thm}
Let~$M$ be a compact Riemann surface and suppose~$\rho\notin \Spect(\D)$. Then, the~\eqref{eq:SShG-intro} admits a nonzero solution. Moreover, if the curvature~$K(g)$ is such that~$K(g)\le c<0$, then there exists a nontrivial solution~$(u,\psi)$ with~$u\not\equiv const.$ 
\end{thm}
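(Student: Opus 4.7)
The strategy is to obtain a nontrivial critical point of the strongly indefinite functional $J_\rho$ by a linking argument, after a Nehari-type reduction that tames the negative part of the Dirac spectrum, followed by a careful Palais--Smale analysis. Since $\rho\notin\Spect(\D)$, there is a spectral gap around $\rho$, which I use to split
$$H^{\frac{1}{2}}(\Sigma M)=\cH^+\oplus\cH^-,$$
where $\cH^\pm$ is the closed span of eigenspinors of $\D$ with eigenvalue strictly greater (respectively smaller) than $\rho$. Writing $\psi=\psi^++\psi^-$, the quadratic form $\int_M\Abracket{(\D-\rho\,\id)\psi,\psi}\,\dv_g$ is positive definite on $\cH^+$ and negative definite on $\cH^-$, both with uniform gaps.

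For each $(u,\psi^+)$ in a bounded region controlled by Moser--Trudinger estimates on $\cosh(u)$, the map $\psi^-\mapsto J_\rho(u,\psi^++\psi^-)$ is strictly concave on $\cH^-$: its Hessian is $16\int_M\Abracket{(\D-\rho\cosh(u))\psi^-,\psi^-}\,\dv_g$, and for $\rho>0$ the shift $\rho\cosh(u)\ge\rho$ keeps this operator uniformly negative-definite on $\cH^-$, the case $\rho<0$ being treated by the analogous splitting with the roles of $\cH^\pm$ reversed. This yields a unique maximizer $\psi^-=h(u,\psi^+)$, smooth in its arguments, and the reduced functional
$$I_\rho(u,\psi^+):=J_\rho\bigl(u,\psi^++h(u,\psi^+)\bigr)$$
has the same critical points as $J_\rho$. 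A linking geometry is then arranged for $I_\rho$ between a small sphere $S=\{(0,\psi^+):\|\psi^+\|_{H^{1/2}}=r\}$, on which $I_\rho\ge\alpha>0$, and the boundary of a finite-dimensional set $Q$ built from the origin, a large element in the $u$-direction and a low-lying eigenspinor in $\cH^+$, chosen so that $I_\rho\le 0$ on $\partial Q$. The linking theorem then yields a Palais--Smale sequence at a positive min-max level $c_\rho$.

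The main obstacle is the Palais--Smale compactness at level $c_\rho$: the coupling $\cosh(u)|\psi|^2$ can in principle produce simultaneous bubbling of $u$ and spinorial concentration, which must be ruled out by combining Moser--Trudinger estimates with the compactness of the embeddings $H^1(M)\hookrightarrow L^p(M)$ and $H^{\frac{1}{2}}(\Sigma M)\hookrightarrow L^q(\Sigma M)$ and elliptic regularity for $\Delta$ and $\D$. Since $c_\rho>0$, the resulting critical point is nontrivial, establishing the first assertion. For the second assertion, constant-$u$ critical points are excluded as follows: $u\equiv 0$ forces $\D\psi=\rho\psi$, impossible since $\rho\notin\Spect(\D)$, while $u\equiv u_0\ne 0$ (possible only for $\rho>0$) forces $|\psi|^2\equiv\rho\cosh(u_0)$ and $\psi$ to be an eigenspinor of $\D$ of constant length with eigenvalue $\rho\cosh(u_0)$. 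The latter configuration is ruled out on surfaces with $K(g)\le c<0$ by a rigidity argument based on the Lichnerowicz--Schr\"odinger identity
$$\tfrac{1}{2}\Delta|\psi|^2=|\D\psi|^2-\tfrac{\Scal(g)}{4}|\psi|^2-|\snabla\psi|^2$$
combined with a refined Kato/Hijazi-type inequality, which forces $\psi$ to be a twistor spinor; but no nontrivial twistor spinors exist on a closed Riemann surface of strictly negative curvature.
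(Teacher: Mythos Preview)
Your reduction is genuinely different from the paper's and, for the first assertion, is in some ways cleaner. The paper splits $H^{1/2}(\Sigma M)$ at the eigenvalue $0$ of $\D$, defines the Nehari set $N_\rho$ by killing only the strictly negative Dirac directions, and must then treat two regimes separately: a mountain-pass geometry when $h=\dim\ker\D=0$ and $0<\rho<\lambda_1$, and a more elaborate linking when $h>0$ or $\rho>\lambda_1$ (building a solid cylinder $\cD$ over the finite-dimensional negative cone $\sN_\rho$). By splitting at $\rho$ instead, your reduced functional $I_\rho$ always has a strict local minimum at the origin, so a single mountain-pass argument suffices regardless of $h$ and of the position of $\rho$ in the spectrum. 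Two caveats: your sphere $S$ should be taken in the full $(u,\psi^+)$-space, not only in $\{0\}\times\cH^+$, since you need $I_\rho\ge\alpha>0$ on all of $\partial B_r$ to run mountain pass (this does hold, by the same cubic estimate on the cross term $\rho(1-\cosh u)|\psi|^2$ as in the paper); and what you describe is really mountain pass rather than linking. Also, the Palais--Smale verification you defer is the substantive part of the paper's argument: it requires testing the equations against $\tanh(u_n/2)$ and against the spectral pieces $\psi_{an}^+,\psi_{bn}^+,\psi_n^0,\psi_n^-$ separately, and cannot be reduced to a soft compactness statement.

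Your argument for the second assertion, however, is incorrect. From $|\psi|\equiv\text{const}$ and $\D\psi=\lambda\psi$ with $\lambda=\rho\cosh(u_0)$, the Lichnerowicz identity in dimension $2$ gives $|\snabla\psi|^2=(\lambda^2-\tfrac{K}{2})|\psi|^2$, hence for the twistor operator $P$ one finds
\[
|P\psi|^2=|\snabla\psi|^2-\tfrac{1}{2}|\D\psi|^2=\tfrac{1}{2}(\lambda^2-K)\,|\psi|^2,
\]
which is \emph{strictly positive} when $K\le c<0$; so $\psi$ is \emph{not} a twistor spinor, and no Kato/Hijazi refinement reverses this sign. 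The paper's route is different: by Friedrich's spinorial Weierstrass representation, a nonzero eigenspinor of constant length induces a CMC isometric immersion of the universal cover $\widetilde M$ into $\R^3$, and Efimov's theorem forbids such immersions for complete surfaces with $K\le c<0$. You should replace the Lichnerowicz/twistor step by this Friedrich--Efimov argument.
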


At least to our knowledge, this seems to be the first example of non-trivial solution for the~\eqref{eq:SShG-intro} in this setting.

Note that neither the action functional~$J_\rho$ in~\eqref{eq:action functional} nor the super sinh-Gordon equations~\eqref{eq:SShG-intro} is conformally invariant, which is in great contrast with (super) Liouville equations. 
This aspect will be further taken into account in the blowup analysis of the system~\eqref{eq:SShG-intro}, which will be addressed in another work. 

Let us comment on the last statement in the above theorem. When there exist eigenspinors of constant length, we can again find solutions with nonzero but constant function components.
However, by a classical result of Friedrich~\cite[Theorem 13]{friedrich1998spinorrepresentation}, if there exists a nonzero eigenspinor of constant length, then it induces a constant mean curvature (CMC) immersion of the universal cover~$\widetilde{M}$ of~$(M,g)$, which is impossible by Efimov theorem~\cite{efimov1964generation, efimove1966hyperbolic,milnor1972efimov} if the compact surface~$M$ has negative curvature~$K(g)\le c<0$. 
Thus in such a case, if we obtain a nonzero solution, then we are sure that it is a nontrivial solution. 
On the other hand, in case~$K(g)$ changes sign, if we obtain constant length solutions, they would induce CMC immersions of the universal cover, which are also interesting. See  the end of Section~\ref{sec:preliminaries} for more details in this respect.
 
\

These types of problems were studied for a long time, but the existence theory for Dirac's operator is far from satisfactory, especially when coupled in a 
system of equations. 
However, the super extensions of the classical equations are quite natural in physics, as they are the basic equations describing fermionic fields and thus deserve a thorough mathematical theory.
Here we address~\eqref{eq:SShG-intro} and hope that this method can be extended to study similar problems. 

One of the main difficulties of such problems involving Dirac's operators is the strong indefiniteness together with a critical coupling with the ``bosonic'' non-linearity. 
Although the critical point theory for indefinite functionals was studied in  abstract form by Benci--Rabinowitz~\cite{bencirabinowitz1979critical},  Benci~\cite{benci1982critical}, for Schr\"odinger equations by Szulkin--Weth~\cite{szulkin2009ground} and also for Dirac's equations in planar form  by Bartsch--Ding~\cite{bartsch2006solutions}, Ding~\cite{ding2007variational} and on spin manifold with suitable nonlinearity by Isobe~\cite{isobe2010existence,isobe2011nonlinear,isobe2019onthemultiple}, most of the classical results do not apply directly here because of the nonlinear nature of the coupled problem. 
By this we mean that the Nehari manifolds employed here are not a linear space and the potential part is of exponential type.  

Part of the strategy in our previous work~\cite{jevnikar2020existence} about super Liouville equations can be used here. 
However, some  non-trivial new ideas are needed to handle problem~\eqref{eq:SShG-intro}. Special care is devoted to the study of the Palais--Smale condition, which is based on spectral decomposition and suitable test functions. Moreover, differently from the the super Liouville case in~\cite{jevnikar2020existence} we are able to treat the presence of harmonic spinors (i.e. ~$\dim \ker(\D)>0$) by exploiting the fact that the potential and its derivatives are bounded away from zero. This makes the argument technically more difficult both in the verification of the Palais--Smale condition and in the linking construction.
We can also solve~\eqref{eq:SShG-intro} on any closed Riemann surface, without genus restrictions as in~\cite{jevnikar2020existence}.
Finally, by exploiting the~$\bZ_2$-symmetry of the problem, we are able to produce a multiplicity result. In order to prove it, we need to 
construct an equivariant family of elements in the Nehari manifold with sufficiently low energy. To achieve such a property, 
it is in general convenient to consider scalar components of nearly constant absolute value, in an integral sense: to achieve a 
non-trivial family of this type we consider a {\em sweepout} of the surface via a thin interface, where the scalar component 
passes from the value $+1$ to the value $-1$. With these test functions at hand, we then employ 
 a min-max scheme of  {\em fountain-type}, see e.g.~\cite{willem1997minimax}, obtaining the following result. 

\begin{thm}\label{thm:main thm2}
Let~$M$ be a compact Riemann surface and suppose~$\rho\notin \Spect(\D)$. Then~\eqref{eq:SShG-intro} admits at least two geometrically distinct nonzero solutions.
\end{thm}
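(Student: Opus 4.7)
The strategy is an equivariant extension of the variational scheme used for Theorem~\ref{thm:main thm}, implemented through a $\bZ_2$-fountain. The starting observation is that the involution
\[
\sigma\colon (u,\psi)\longmapsto (-u,\psi)
\]
preserves both $J_\rho$ and the system~\eqref{eq:SShG-intro}, since $\cosh$ and $\sinh^2$ are even in $u$ while $\D$ and $|\psi|^2$ do not see the sign of $u$. Hence nontrivial critical points come in $\sigma$-orbits of two elements, and two solutions are \emph{geometrically distinct} as soon as they lie in different orbits. The Nehari-type constraint $\mathcal{N}$ built for the proof of Theorem~\ref{thm:main thm} is immediately $\sigma$-invariant, so $J_\rho|_{\mathcal{N}}$ is a $\bZ_2$-equivariant functional to which the equivariant deformation lemma applies, combined with the Palais--Smale analysis already carried out. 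To promote the single mountain-pass solution of Theorem~\ref{thm:main thm} to a second distinct orbit, one needs an equivariant family in $\mathcal{N}$ that cannot be continuously deformed through the fixed point set $\{u\equiv 0\}$.

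The core new ingredient is a \emph{sweepout} of $M$ producing such a family with controlled energy. Fix a smooth one-parameter family $\{\Omega_\tau\}_{\tau\in[-1,1]}$ of open subsets of $M$ with $\Omega_{-1}=\emptyset$, $\Omega_1=M$, and $\Omega_{-\tau}$ complementary to $\Omega_\tau$ up to a negligible interface. Define $u_\tau\in H^1(M)$ as a mollification of $\mathds{1}_{\Omega_\tau}-\mathds{1}_{M\setminus\Omega_\tau}$ on a strip of width $\eps$ along $\partial\Omega_\tau$; then $u_{-\tau}=-u_\tau$ and $\sigma$ acts antipodally on the parameter. Since $|u_\tau|\equiv 1$ off the interface,
\[
\int_M\cosh(u_\tau)\,|\psi|^2\,\dv_g\;\approx\;\cosh(1)\!\int_M|\psi|^2\,\dv_g,\qquad \int_M\sinh(u_\tau)^2\,\dv_g\;\approx\;\sinh(1)^2\,\vol_g(M),
\]
while $\int_M|\nabla u_\tau|^2\,\dv_g\lesssim \mathcal{H}^1(\partial\Omega_\tau)/\eps$ can be kept bounded along the sweepout. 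Completing each $u_\tau$ to an element $(u_\tau,\psi_\tau)\in\mathcal{N}$ via the Nehari reduction in the $\psi$-direction, and then iterating the construction over $k$ independent nested sweepouts supported on disjoint subregions of $M$, one obtains $\sigma$-equivariant continuous maps $S^{k-1}\to\mathcal{N}$ of arbitrarily large Krasnoselskii genus with a uniform upper bound on $\max J_\rho$.

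With this family in hand I would apply a fountain-type minimax in the sense of~\cite{willem1997minimax}, setting
\begin{equation}
c_k=\inf_{h\in\Gamma_k}\max_{\theta\in S^{k-1}}J_\rho\bigl(h(\theta)\bigr),
\end{equation}
where $\Gamma_k$ is the class of $\sigma$-equivariant admissible maps $S^{k-1}\to\mathcal{N}$ containing the constructed sweepouts. The equivariant Palais--Smale condition on $\mathcal{N}$, together with Borsuk--Ulam-type $\bZ_2$-genus estimates, yields critical values $c_1\le c_2\le\cdots$, and an index-count argument shows that at least one of them must differ from the mountain-pass level of Theorem~\ref{thm:main thm}, producing the second geometrically distinct solution.

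The principal obstacle is the compatibility of the sweepout with the Nehari reduction: this reduction is built from a maximization over the negative spectral part of $\D$ and over a suitable $\psi$-fibre, and its smoothness and $\sigma$-equivariance in the parameter $\tau$ along the interface regime $\eps\to 0$ must be established with care. A second subtle point is keeping each $c_k$ \emph{below} the energy threshold controlling Palais--Smale sequences while still positive, which requires a delicate balance between the interface cost $\mathcal{H}^1(\partial\Omega_\tau)/\eps$ and the bulk scalar term $4\rho^2\sinh(1)^2\vol_g(M)$. Here it is essential that $\cosh\ge 1$ stays bounded away from $0$, in contrast with the super Liouville situation of~\cite{jevnikar2020existence}: this uniform coercivity of the coupling $\cosh(u)|\psi|^2$ is what makes the Nehari reduction well-behaved on $u$ of both signs and, ultimately, what enables the equivariant sweepout to detect a genuinely new critical orbit.
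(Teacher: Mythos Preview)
Your overall strategy matches the paper's---exploit the involution $(u,\psi)\mapsto(-u,\psi)$, build a sweepout of sign-changing scalar components, project to the Nehari manifold, and run an equivariant min-max---but two concrete choices make the argument fail as written. First, the amplitude: you take $|u_\tau|\approx 1$, giving $\rho\cosh(u_\tau)\approx\rho\cosh(1)$. For the spinorial part of $J_\rho$ to become negative on the test family one needs $\rho\cosh(u_\tau)$ to dominate the relevant Dirac eigenvalue ($\lambda_1$ in the mountain-pass regime, $\lambda_{k+1}$ in the linking regime), and this fails whenever $\rho$ is small. The paper instead sets $u_\theta=\chi(\theta,\cdot)\,\bar u$ with a \emph{large} constant $\bar u$ chosen so that $\rho\cosh(\bar u)>\lambda_{k+1}$, and then scales the spinor by a large factor $s$ in $\psi_\theta=s\Psi_{k+1}+\phi_\theta$ (with $\phi_\theta$ obtained by the implicit function theorem, not by a maximisation) to force $J_\rho<0$ on the entire boundary circle; without this there is no low-energy boundary set to link against. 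Second, the min-max class: with $\Gamma_k$ consisting of equivariant maps $S^{k-1}\to N_\rho$, your $c_k$ is $-\infty$, because $J_\rho$ is unbounded below on $N_\rho$ even along equivariant orbits (send $\pm 1\in S^0$ to $(\pm\bar u,\,s\Psi_1)$ and let $s\to\infty$). The correct class, as in the paper, consists of equivariant maps from the \emph{disk} $B^2\to N_\rho$ with boundary fixed on the constructed circle where $J_\rho<0$; the local mountain-pass/linking geometry near $(0,0)$ then guarantees $c_2\ge c_1>0$.

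Two smaller points. There is no Palais--Smale threshold here: once $\rho\notin\Spect(\D)$, the Palais--Smale condition holds at every level, so the ``delicate balance'' you anticipate between interface cost and bulk term is not needed. And the paper does not separate the two solutions by an index count; it argues directly that either $c_2>c_1$, or else one reruns the mountain-pass scheme inside the $\sigma$-invariant hyperplane $\widetilde N_\rho=\{(u,\psi)\in N_\rho:\langle u,u^{(1)}\rangle_{H^1}=0\}$ and obtains a second critical point at level $c_1$ whose $u$-component is orthogonal to $u^{(1)}$, hence lies in a different $\bZ_2$-orbit.
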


We postpone to Section~\ref{sec:mult} the discussion about how to distinguish variationally the solutions we produce.
As discussed in Remark \ref{r:last}, it is an interesting open problem to find further multiplicity results. 

\

The paper is organized as follows.
In Section~\ref{sec:preliminaries} we list some basic notions on the Dirac operator and the Sobolev spaces needed for our argument. 
Then we define the Nehari-type  constraint in Section~\ref{sec:setting} and check the Palais--Smale condition. 
After that, we can use a min-max principle to get the first (family of) nontrivial solutions in Section~\ref{sec:minimax solutions}, where the discussion splits according to the value of~$\rho$ and~$h=\dim \ker(\D_g)$. 
In the last section we use a fountain-type argument to obtain a second solution.

\

\noindent {\bf Acknowledgments.}
The third author would like to acknowledge Luciano Mari for helpful discussions on the CMC surfaces. 

A.M. has been partially supported by the project {\em Geometric problems with loss of compactness}  from Scuola Normale Superiore.
A.J. and A.M. have been partially supported by MIUR Bando PRIN 2015 2015KB9WPT$_{001}$.
They are also members of GNAMPA as part of INdAM.
R.W. is supported by the project DIP\_ECC\_MATE\_CoordAreaMate\_0495.

\section{Preliminaries}\label{sec:preliminaries}

We will assume some familiarity of spin geometry and classical Sobolev spaces, for which one can refer to e.g.~\cite{lawson1989spin, ginoux2009dirac, jost2011riemannian} and~\cite{gilbarg2001elliptic}. 
In our problem the spectral behaviors of the operators involved  plays a crucial role, and hence a discussion on this aspect will be included. 
Thus let us have a brief discussion on this.
  
Let~$M$ be a compact Riemann surface and~$g$ a metric in the given conformal class.
We know that there is always a spin structure over~$M$; let us fix one and denote it by~$P_{\Spin}(M,g)\to M$. 
The associated spinor bundle is a rank-four real vector bundle, denoted by~$\Sigma M$, whose construction depends on the choice of spin structure and the metric~$g$, both of which are fixed throughout our discussion. 
The important feature of~$\Sigma M$ is that it comes with a canonical Dirac bundle structure in the sense of~\cite[Definition 5.2]{lawson1989spin}: there exist canonical spinor metric~$\Abracket{\cdot,\cdot}$ (i.e. a fiberwise real inner product), a spin connection~$\snabla$ (induced from the Levi-Civita connection), and a Clifford multiplication~$\cliff: TM\to \End(\Sigma M)$ satisfying the Clifford relation
\begin{equation}\label{eq:Clifford relation}
 \cliff(X)\cliff(Y)+\cliff(Y)\cliff(X)=-2g(X,Y), \qquad \forall X,Y\in\Gamma(TM),
\end{equation}
and they are compatible with each other. 
Within these at hand, the Dirac operator~$\D=\D_g$ is the composition of the following operations (the middle isomorphism is given by the Riemannian metric~$g$) 
\begin{equation}
 \Gamma(S)\xrightarrow{\snabla} \Gamma(T^*M\otimes S)\xrightarrow{\cong} \Gamma(TM\otimes S)\xrightarrow{\cliff}\Gamma(S),
\end{equation}
and can be locally expressed in terms of a local orthonormal frame~$(e_i)$ as 
\begin{align}
 \D\psi = \sum_{i} \cliff(e_i)\snabla_{e_i}\psi, \qquad \forall \psi\in \Gamma(\Sigma M). 
\end{align}
It is an (essentially) self-adjoint, elliptic differential operator, due to the choice of the above Clifford relation~\eqref{eq:Clifford relation}.
$\D$ is the fundamental operator for the description of fermionic particles, just as the Laplacian operator is for bosonic ones. 

There is a well-defined global endomorphism~$\omega\coloneqq \cliff(e_1)\cliff(e_2) \in \End(\Sigma M)$ which anti-commutes with~$\D$,
in the sense that: 
\begin{align}\label{eq:volume action}
 \D(\omega\cdot \psi) = -\omega\cdot \D\psi, \qquad \forall\psi\in\Gamma(\Sigma M). 
\end{align}
Moreover,~$\Sigma M$ admits a quaternionic structure; 
in particular there exists a three-dimensional family~$\mathcal{J}$ of almost complex structrues, with each~$ \mathbf{j}\in \mathcal{J}$ commuting with~$\D$:
\begin{align}\label{eq:quaternionic action}
 \D(\mathbf{j}(\psi))= \mathbf{j}(\D\psi),\qquad \forall\psi\in\Gamma(\Sigma M). 
\end{align}
This gives an~$\sph^3$ (which is a Lie group) action on~$(\Sigma M, \D)$.  

Since the surface is compact, the spectrum of~$\D$, denoted by~$\Spect(\D)$, consists of eigenvalues.
Note that~$\D$ may have nontrivial kernel~$\ker(\D)$, whose elements are called \emph{harmonic spinors}. 
The dimension~$h=\dim\ker(\D)$ is necessarily finite and also conformally invariant: its value depends on the spin structure and conformal class. 

For later convenience, let~$\lambda_j$, $j\in \bZ_*=\bZ\setminus \{0 \}$ be the nonzero eigenvalues listed with multiplicities and in a non-decreasing order as
\begin{align}
  -\infty \leftarrow\cdots\le \lambda_{-k-1}\le \lambda_{-k}\le\cdots\le \lambda_{-1}\le 0 
 \le \lambda_1\le \cdots \le \lambda_k \le \lambda_{k+1}\le \cdots \to +\infty,
\end{align}
and let~$\lambda_0^l=0$ with~$1\le l\le h$ denote the zero eigenvalues (if any) counted with multiplicity. 
The corresponding eigenspinors are denoted by~$\Psi_j$,~$j\in \bZ_*$ and~$\Psi_{0,l}$,~$1\le l\le h$ if~$h\neq 0$, respectively. 
We may assume that these eigenspinors form a complete ~$L^2-$orthonormal basis for the~$L^2$ spinors.
The property~\eqref{eq:volume action} of~$\omega$ implies that~$\lambda_{-k}= -\lambda_k$ and~$\Psi_{-k}= \omega\cdot\Psi_k$, for any~$k\in \bZ_*$, while the existence of quaternionic structures and~\eqref{eq:quaternionic action} imply that any eigenvalue has multiplicity at least 3.

With respect to the above basis, any~$\psi\in L^2(\Sigma M)$ can be uniquely written as 
\begin{align}
 \psi= \sum_{j\in \bZ_*} a_j \Psi_j 
      +\sum_{0\le l\le h} b_l\Psi_{0,l}. 
\end{align}
If~$\psi \in C^1$ (or in~$H^1=W^{1,2}$), then
\begin{align}
 \D\psi =\sum_{j\in\bZ_*} a_j \lambda_j \Psi_j. 
\end{align}
This motivates the definition of fractional Dirac's operators as in~\cite{ammann2003habilitation}: for any~$s>0$, define~$|\D|^s\colon \Gamma(\Sigma M)\to \Gamma(\Sigma M)$ by 
\begin{align}
 |\D|^s \psi 
 = \sum_{j\in \bZ_*} |\lambda_j|^s a_j\Psi_j. 
\end{align}
The completion of~$\Gamma(\Sigma M)$ with respect to the inner product 
\begin{align}
 \Abracket{\psi,\phi}_{H^s} \coloneqq 
 \Abracket{\psi,\phi}_{L^2} + \Abracket{|\D|^s\psi, |\D|^s\phi}_{L^2}
\end{align}
is defined as the Sobolev space of spinors of order~$s$:
\begin{align}
 H^s(\Sigma M)\coloneqq 
 \braces{ \psi\in L^2(\Sigma M) \mid 
  \Abracket{\psi,\psi}_{H^s}<\infty}. 
\end{align}
Note that for~$s\in\mathbb{N}$, the space~$H^s(\Sigma M)$ coincides with the classical Sobolev space of spinor $W^{s,2}(\Sigma M)$ defined via covariant derivatives, so there is no confusion of notation. 
Furthermore, for~$-s<0$, the space~$H^{-s}(\Sigma M)$ is as usual defined to be the dual space~$(H^s(\Sigma M))^*$.
The Sobolev embedding theorem continues to hold in this setting.
In particular, for~$s\in (0,1)$, the space~$H^s(\Sigma M)$ continuously embeds into~$L^q(\Sigma M)$ for~$1\le q\le\frac{2}{1-s}$ and compactly embeds in~$L^q(\Sigma M)$ for~$1\le q<\frac{2}{1-s}$. 

The natural space of spinors to work with for Dirac's operators  is~$H^{1/2}(\Sigma M)$. 
According to the signs of the eigenvalues, we have the following decomposition
\begin{align}
 H^{\frac{1}{2}}(\Sigma M) = H^{\frac{1}{2},+}(\Sigma M)\oplus H^{\frac{1}{2},0}(\Sigma M) \oplus H^{\frac{1}{2},-}(\Sigma M),
 \qquad \psi= \psi^+ + \psi^0+\psi^-,
\end{align}
where~$H^{1/2,\pm}(\Sigma M)$ denotes the closure of the subspaces spanned by eigenspinors of positive resp. negative eigenvalues, while~$H^{1/2,0}(\Sigma M)$ is the~$h$-dimensional subpaces spanned by harmonic spinors (note that harmonic spinors are automatically smooth).
Moreover, given a positive~$\rho \notin\Spect(\D)$, we further split the space~$H^{1/2,+}(\Sigma M)$ into 
\begin{align}
 H^{\frac{1}{2},+}(\Sigma M)
 = H^{\frac{1}{2},+}_a(\Sigma M)\oplus H^{\frac{1}{2},+}_b(\Sigma M) 
\end{align}
with~$H^{1/2,\pm}_a(\Sigma M)$ being the closure (in~$H^{1/2}$) of the subspace spanned by eigenspinors corresponding to eigenvalues ``above'' respectively ``below''~$\rho$, namely:~$\lambda_j> \rho$ for~$H^{1/2,+}_a$ and~$0<\lambda_j< \rho$ for~$H^{1/2,+}_b$. 
Thus a spinor~$\psi\in H^{1/2}(\Sigma M)$ can be accordingly decomposed as 
\begin{align}\label{eq:decomp psi wrt rho}
 \psi= \psi^+_a +\psi^+_b + \psi^0 +\psi^-,
\end{align}
with a self-explaining notation. 

On the various subspaces of~$H^{1/2}(\Sigma M)$, the Dirac operators behaves differently, by definition. 
For example, we have 
\begin{align}
 \|\psi^+\|_{H^{1/2}} = \sum_{j>0} (1+\lambda_j) a_j^2, 
\end{align}
hence 
\begin{align}
 \int_M \Abracket{\D\psi^+,\psi^+}\dv_g 
  =\sum_{j>0} \lambda_j a_j^2 
  \ge \braces{\inf_{j>0}\frac{\lambda_j}{1+\lambda_j}} \|\psi^+\|_{H^{1/2}}^2
  = \frac{\lambda_1}{1+\lambda_1} \|\psi^+\|_{H^{1/2}}^2. 
\end{align}
We will analyze the concrete cases when we encounter them in the sequel.

For further materials on such spinors, and also the Moser-Trudinger embedding for~$H^1$ functions, see~\cite{ammann2003habilitation, aubin1998somenonlinear}, and also the preliminary part of~\cite{jevnikar2020existence}. 

Finally, we add a brief discussion on eigenspinors of constant length.
They give rise to the semi-trivial solutions of~\eqref{eq:SShG-intro}. 
That is, if~$(u,\psi)$ is a solution and~$u=const.$, then the first equation tells us that~$|\psi|=\rho\cosh(u)=const.$, while the second equation says~$\psi$ is an eigenspinor of the eigenvalues~$\rho\cosh(u)$. 
There are, indeed, such examples: on the round sphere~$\sph^2$, the Killing spinors are of constant length and are eigenspinors. 
They are the first observed solutions. 
Moreover, N. Kapouleas~\cite{kapouleas1990complete, kapouleas1991compact} showed that for a compact surface of genus~$\ge 3$, there are infinitely many immersions into~$\R^3$ with constant mean curvature~$H>0$. 
They give rise to eigenspinors of constant length on the immersed surface, and hence induce solutions of~\eqref{eq:SShG-intro} with~$u$ being a suitable constant. 
However, it is necessary that the Gauss curvature of the indueced metric for such immersions changes signs. 
Actually, for a compact surface with negative Gaussian curvature~$K(g)\le c<0$, this cannot happen, for the following reasons. 
Using the Weierstrass representation of CMC surfaces, T. Friedrich~\cite{friedrich1998spinorrepresentation} showed that: on a surface~$M$ and for a function~$H\in C^\infty(M)$,  there is a spinor~$\psi$ satisfying~$\D\psi= H\psi$ if and only if there is an immersion of~$\widetilde{M}$ (the universal cover of~$M$) into~$\R^3$ with mean curvature~$H$.  
However, the Efimov theorem says that there is no isometric ~$C^2$ immersed complete surfaces in~$\R^3$ with~$K(g)\le const. < 0$. 
This guarantees us a nontrivial solution on a surface of genus greater than one and with Gaussian curvature~$K(g)\le c<0$, once we find a nonzero solution.

\section{A variational setting}\label{sec:setting}
On the compact surface~$(M,g)$ with  spinor bundle~$\Sigma M$, we consider the action functional 
\begin{align}
 J_\rho \colon H^1(M)\times H^{\frac{1}{2}}(\Sigma M) \to \R
\end{align}
given by~\eqref{eq:action functional}:
\begin{align}\tag{\ref{eq:action functional}}
 J_\rho(u,\psi)
 = \int_M \left( |\nabla u|^2+8\Abracket{\D\psi,\psi}-8\rho\cosh(u)|\psi|^2 
 + 4\rho^2 \sinh(u)^2 \right) \dv_g. 
\end{align}
Its first variation is the following 
\begin{align}
 \dd J_\rho(u,\psi)[v,\phi]
 =&\int_M (-2\Delta u+ 4\rho^2\cdot 2\sinh(u)\cosh(u)-8\rho\sinh(u)|\psi|^2) v \dv_g \\
  &\qquad +\int_M 16\Abracket{\D\psi-\rho\cosh(u)\psi,\phi}\dv_g. 
\end{align}
Thus the Euler--Lagrange equations are 
\begin{equation*}\label{eq:SShG}\tag{\ref{eq:SShG-intro}}
 \begin{cases}
  \Delta u= 2\rho^2 \sinh(2u)- 4\rho \sinh(u)|\psi|^2, \\
  \D\psi= \rho \cosh(u)\psi. 
 \end{cases}
\end{equation*}

The difficulty in dealing with such equations is due to the strong indefiniteness of the Dirac operator, and a typical useful strategy is to use some Nehari type manifold to kill most of the negative directions, see e.g.~\cite{maalaoui2017characterization,jevnikar2020existence, jevnikar2020existence-sphere} and also~\cite{szulkin2009ground,szulkin2010themethod} for a more general treatment.
Here we will adopt the same approach. 
The outline of the proof here is a refinement of the argument introduced for super Liouville equations in~\cite{jevnikar2020existence}.

Define the set
\begin{align}
 N_\rho\coloneqq 
 \braces{ (u,\psi)\in H^1(M)\times H^{\frac{1}{2}}(\Sigma M) \mid P^- (1+|\D|)^{-1}(\D\psi-\rho\cosh(u)\psi)=0 }, 
\end{align}
which is clearly non-empty. 
Moreover, for each fixed~$u\in H^1(M)$, the above constraint  gives a vector space
\begin{equation}
 N_{\rho,u}\coloneqq \{\psi\in H^{\frac{1}{2}}(\Sigma M)\mid P^- (1+|\D|)^{-1}(\D\psi-\rho\cosh(u)\psi)=0  \}
\end{equation}
consisting of spinors lying in the kernel of the linear operator~$P^-(1+|\D|)^{-1}(\D-\rho\cosh(u))$. 
Therefore, we have a fibration
\begin{align}
 N_{\rho, u}\hookrightarrow N_\rho \to H^1(M).
\end{align}
This tells us that~$N_\rho$ has a vector bundle structure and is globally homeomorphic to a Hilbert space. 
We want next to understand some properties of the functional functional~$J_\rho$ restricted to~$N_\rho$.

An equivalent but useful description of~$N_\rho$ is given as follows. 
Define
\begin{align}
 G\colon  H^1(M)\times H^{\frac{1}{2}}(\Sigma M) 
 \to & H^{\frac{1}{2},-}(\Sigma M), \\
 (u,\psi) \mapsto & G(u,\psi)= P^-(1+|\D|)^{-1}(\D\psi-\rho\cosh(u)\psi).
\end{align}
Then~$N_\rho = G^{-1}(0)$ is a level set. 

\begin{lemma}
 $N_\rho$ is a smooth submanifold of~$H^1(M)\times H^{\frac{1}{2}}(\Sigma M)$. 
\end{lemma}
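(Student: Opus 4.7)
The natural approach is to exhibit $N_\rho$ as the zero set of a smooth submersion: verify that $G$ is of class $C^\infty$ and that $dG(u,\psi)$ is surjective at every $(u,\psi)\in N_\rho=G^{-1}(0)$, so that the regular value theorem in Hilbert spaces applies and immediately yields the submanifold structure together with a canonical description of the tangent space.

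Smoothness of $G$ is routine once one unpacks the ingredients: $\D\colon H^{1/2}(\Sigma M)\to H^{-1/2}(\Sigma M)$ is bounded linear, $u\mapsto\cosh(u)$ is a smooth Nemytskii map from $H^1(M)$ into every $L^p(M)$ thanks to the Moser--Trudinger inequality in dimension two, and the product $(u,\psi)\mapsto\cosh(u)\psi$ is then smooth from $H^1\times H^{1/2}$ into $L^r\hookrightarrow H^{-1/2}$ for suitable $r>1$. Composing with the smoothing $(1+|\D|)^{-1}\colon H^{-1/2}\to H^{1/2}$ and the bounded spectral projector $P^-$ produces a smooth map into $H^{1/2,-}(\Sigma M)$.

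The heart of the argument is the surjectivity of
\begin{align}
 dG(u,\psi)[v,\phi]=P^-(1+|\D|)^{-1}\bigl(\D\phi-\rho\cosh(u)\phi-\rho\sinh(u)\,v\,\psi\bigr).
\end{align}
Given $\eta\in H^{1/2,-}(\Sigma M)$ I would set $v=0$ and look for $\phi=\phi^-\in H^{1/2,-}$, so that everything reduces to inverting the endomorphism
\begin{align}
 T\colon H^{1/2,-}\to H^{1/2,-},\qquad T(\phi^-)=P^-(1+|\D|)^{-1}\bigl(\D\phi^--\rho\cosh(u)\phi^-\bigr).
\end{align}
Write $T=T_0-T_1$: the piece $T_0=(1+|\D|)^{-1}\D$ acts on each negative eigenspace as multiplication by $\lambda_j/(1+|\lambda_j|)$, is bounded and bounded below (because $|\lambda_{-1}|>0$), and is therefore a Hilbert-space isomorphism of $H^{1/2,-}$; the piece $T_1=\rho P^-(1+|\D|)^{-1}\bigl(\cosh(u)\,\cdot\bigr)$ is compact, using the compact two-dimensional Sobolev embeddings that govern multiplication by $\cosh(u)\in\bigcap_{p<\infty}L^p$ together with the smoothing provided by $(1+|\D|)^{-1}$. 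Hence $T$ is Fredholm of index zero.

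To conclude, injectivity of $T$ follows from a sign argument. If $T(\phi^-)=0$ then $P^-(\D\phi^--\rho\cosh(u)\phi^-)=0$; pairing with $\phi^-$ via the duality $H^{-1/2}$--$H^{1/2}$ and using that $P^-$ is self-adjoint with $P^-\phi^-=\phi^-$ yields
\begin{align}
 \int_M\Abracket{\D\phi^-,\phi^-}\dv_g=\rho\int_M\cosh(u)|\phi^-|^2\dv_g.
\end{align}
The left-hand side equals $\sum_{j<0}\lambda_j|a_j|^2\le 0$ in the eigenbasis, whereas the right-hand side is $\ge 0$ since $\rho>0$ and $\cosh\ge 1$; both terms must then vanish, forcing $\phi^-=0$. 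The Fredholm alternative now makes $T$ an isomorphism, so $dG(u,\psi)$ is surjective with closed (hence complemented) kernel, and the submersion theorem completes the proof. The step I expect to be most delicate is the bookkeeping of Sobolev exponents needed for the smoothness of the Nemytskii map and for the compactness of $T_1$ on the fractional spinor space $H^{1/2,-}$; the injectivity computation itself is short and robust once the functional-analytic set-up is in place.
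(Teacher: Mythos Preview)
Your proof is correct and rests on the same key observation as the paper: restrict $dG(u,\psi)$ to the slice $\{0\}\times H^{1/2,-}$ and exploit the sign of the associated quadratic form on $H^{1/2,-}$. The paper, however, takes a shorter path. Instead of writing $T=T_0-T_1$, arguing compactness of $T_1$, and invoking the Fredholm alternative, it simply observes that for every $\phi\in H^{1/2,-}$,
\begin{align}
 \Abracket{dG(u,\psi)[0,\phi],\phi}_{H^{1/2}}
 =\int_M\Abracket{\D\phi,\phi}\dv_g-\rho\int_M\cosh(u)|\phi|^2\dv_g
 \le -C_1\|\phi\|_{H^{1/2}}^2,
\end{align}
so the bilinear form $(\phi,\eta)\mapsto\Abracket{T\phi,\eta}_{H^{1/2}}$ is coercive on $H^{1/2,-}$ and Lax--Milgram gives invertibility of $T$ at once. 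Your injectivity computation is exactly this coercivity estimate, so the Fredholm machinery (and in particular the compactness of $T_1$) is unnecessary overhead: the same inequality that kills the kernel already produces surjectivity directly. Your more explicit treatment of the smoothness of $G$ is a welcome addition that the paper leaves implicit.
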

    \begin{proof}
     We show that for any~$(u,\psi)\in N_\rho$, the differential~$\dd G(u,\psi)\colon H^1(M)\times H^{\frac{1}{2}}(\Sigma M)\to H^{\frac{1}{2},-}(\Sigma M)$ is surjective. 
     Indeed, for any~$(v,\phi)\in H^1(M)\times H^{\frac{1}{2}}(\Sigma M)$, we have 
     \begin{align}
      \dd G(u,\psi)[v,\phi]
      = P^-(1+|\D|)^{-1}(\D\phi-\rho\cosh(u)\phi-\rho\sinh(u)v\psi). 
     \end{align}
     In particular, for any~$\phi\in H^{\frac{1}{2},-}(\Sigma M)$, the quadratic form $\Abracket{\dd G(u,\psi)[0,\phi], \phi}_{H^{\frac{1}{2}}}$ satisfies 
     \begin{align}
      \Abracket{\dd G(u,\psi)[0,\phi], \phi}_{H^{\frac{1}{2}}}
      =&\int_M \Abracket{\D\phi,\phi}\dv_g -\rho\int_M \cosh(u)|\phi|^2\dv_g  \\
      \le& -C_1 \|\phi\|_{H^{1/2}}^2 -\rho\int_M \cosh(u)|\phi|^2\dv_g 
     \end{align}
     and therefore it is non-degenerate. 
     It follows that~$\dd G(u,\psi)$ is surjective, and~$N_\rho= G^{-1}(0)$ is a smooth submanifold.  
    \end{proof}
    
Consider the constrained functional~$J_\rho|_{N_\rho}$, which has the advantage of  being no longer strongly indefinite. 
Let~$(u,\psi)\in N_\rho$ be a constrained critical point, namely 
\begin{align}
 0=\nabla^{N_\rho} J_\rho (u,\psi) 
 = \dd J_\rho(u,\psi)
   - 16\Abracket{\dd G(u,\psi),\varphi}
\end{align}
for some Lagrange multiplier~$\varphi=\varphi(u,\psi)\in H^{\frac{1}{2},-}(\Sigma M)$, where the coefficient~$16$ has been added for later convenience. 
This is to say, for any~$(v,\phi)\in H^1(M)\times H^{\frac{1}{2}}(\Sigma M)$, 
\begin{align}
 0= &\nabla^{N_\rho} J_\rho (u,\psi) [v,\phi]
 = \dd J_\rho(u,\psi)[v,\phi]
   - \Abracket{\dd G(u,\psi)[v,\phi],\varphi} \\
 =& \int_M \left[ (-2\Delta u+ 4\rho^2\cdot 2\sinh(u)\cosh(u)-8\rho\sinh(u)|\psi|^2) v 
 + 16\Abracket{\D\psi-\rho\cosh(u)\psi,\phi} \right] \dv_g \\
 &-16\int_M \left( -\rho\sinh(u)\Abracket{\psi,\varphi}+\Abracket{\D\varphi-\rho\cosh(u)\varphi,\phi} \right) \dv_g. 
\end{align}
Therefore,~$(u,\psi)$ satisfies 
\begin{align}\label{eq:EL:constrained:u}
 -\Delta u+2\rho^2\sinh(2u)-4\rho\cosh(u)|\psi|^2 
 + 8\rho\sinh(u)\Abracket{\psi,\varphi}=0, 
\end{align}
\begin{align}\label{eq:EL:constrained:psi}
 \D\psi-\rho\cosh(u)\psi
 -(\D\varphi-\rho\cosh(u)\varphi)=0, 
\end{align}
for some~$\varphi=\varphi(u,\psi)\in H^{\frac{1}{2},-}(\Sigma M)$. 
\begin{lemma}\label{lemma:naturality}
 If~$(u,\psi)\in H^1(M)\times H^{\frac{1}{2}}(\Sigma M)$ satisfies~\eqref{eq:EL:constrained:u}-\eqref{eq:EL:constrained:psi} for some~$\varphi\in H^{\frac{1}{2},-}(\Sigma M)$, then~$\varphi=0$ and hence~$(u,\psi)$ solves~\eqref{eq:SShG}. 
\end{lemma}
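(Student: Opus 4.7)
The plan is to exploit the natural $H^{-1/2}\times H^{1/2}$ duality pairing and test equation \eqref{eq:EL:constrained:psi} against the Lagrange multiplier~$\varphi$ itself. Writing $f\coloneqq \D\psi-\rho\cosh(u)\psi$, one first checks that $f\in H^{-1/2}(\Sigma M)$: the first summand lies in $H^{-1/2}$ because $\psi\in H^{1/2}$ and $\D\colon H^{1/2}\to H^{-1/2}$ is bounded, while the second is actually in $L^2$ thanks to the Moser--Trudinger embedding for $u\in H^1(M)$ and the $H^{1/2}\hookrightarrow L^q$ embedding for~$\psi$. The analogous statement applies with $\varphi$ in place of $\psi$, so both pairings $\langle f,\varphi\rangle$ and $\langle \D\varphi-\rho\cosh(u)\varphi,\varphi\rangle$ are well defined.

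Next I would unfold the constraint $(u,\psi)\in N_\rho$ in the eigenbasis $\{\Psi_j\}_{j\in\bZ_*}\cup\{\Psi_{0,l}\}$. Expanding $f=\sum_{j\neq 0}c_j\Psi_j+\sum_l c_{0,l}\Psi_{0,l}$ (with coefficients in the sense of duality), one has
\begin{align}
 P^-(1+|\D|)^{-1}f=\sum_{j<0}\frac{c_j}{1+|\lambda_j|}\Psi_j,
\end{align}
so $G(u,\psi)=0$ is exactly the statement $c_j=0$ for all $j<0$, i.e.\ the negative-spectrum part of $f$ vanishes. Since $\varphi\in H^{1/2,-}(\Sigma M)$ reads $\varphi=\sum_{j<0}d_j\Psi_j$, the duality pairing
\begin{align}
 \langle f,\varphi\rangle=\sum_{j<0}c_j\,d_j
\end{align}
vanishes identically. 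This is the left-hand side of the pairing of equation~\eqref{eq:EL:constrained:psi} with $\varphi$.

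For the right-hand side, I would compute directly with the same expansion:
\begin{align}
 \langle \D\varphi,\varphi\rangle=\sum_{j<0}\lambda_j d_j^2\le 0,\qquad -\rho\int_M\cosh(u)|\varphi|^2\dv_g\le -\rho\int_M|\varphi|^2\dv_g\le 0,
\end{align}
since all $\lambda_j<0$ for $j<0$, $\cosh(u)\ge 1$, and $\rho>0$. Both terms are non-positive and each one vanishes only when $\varphi\equiv 0$. Combined with the vanishing of the left-hand side, this forces $\varphi=0$, after which \eqref{eq:EL:constrained:u} reduces to the first line of \eqref{eq:SShG} and \eqref{eq:EL:constrained:psi} to the second.

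The only delicate point is the bookkeeping for the pairings: one must be sure that the series for $\langle f,\varphi\rangle$ and $\langle \D\varphi,\varphi\rangle$ converge absolutely and that the $L^2$-orthogonality of the eigenspinors of different signs survives in the duality $H^{-1/2}\times H^{1/2}$. Both are immediate once the coefficients are interpreted through the continuous functional calculus of $\D$ recalled in Section~\ref{sec:preliminaries}, so no real obstacle arises; the argument is essentially algebraic once the decomposition \eqref{eq:decomp psi wrt rho} and the definition of $N_\rho$ are in place.
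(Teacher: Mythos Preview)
Your proposal is correct and follows essentially the same route as the paper: test \eqref{eq:EL:constrained:psi} against~$\varphi$, use the Nehari constraint to kill the term $\langle \D\psi-\rho\cosh(u)\psi,\varphi\rangle$, and then observe that the remaining term $\langle \D\varphi-\rho\cosh(u)\varphi,\varphi\rangle$ is sign-definite on~$H^{1/2,-}$, forcing~$\varphi=0$. The paper's argument is more terse (it does not spell out the eigenspinor expansion or the duality bookkeeping), but the logic is identical.
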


In other words, if~$(u,\psi)$ is a constrained critical point, then it is automatically a free critical point.

\begin{cor}
 $N_\rho$ is a Nehari-type manifold for~$J_\rho$. 
\end{cor}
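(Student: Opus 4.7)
The strategy is to pair \eqref{eq:EL:constrained:psi} with $\varphi$ itself and combine the constraint defining $N_\rho$ with the positivity of $\rho\cosh(u)$ to force $\varphi=0$; the two remaining equations then reduce directly to \eqref{eq:SShG-intro}.

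First I would regard \eqref{eq:EL:constrained:psi} as the identity
$$\D\psi-\rho\cosh(u)\psi=\D\varphi-\rho\cosh(u)\varphi$$
in $H^{-1/2}(\Sigma M)$; this is meaningful since Moser--Trudinger gives $\cosh(u)\in L^p$ for every $p<\infty$ when $u\in H^1(M)$, while $\psi,\varphi\in H^{1/2}\hookrightarrow L^q$ for every $q<\infty$. Because $(1+|\D|)^{-1}$ commutes with $P^-$ and is an isomorphism on each spectral summand, the condition $(u,\psi)\in N_\rho$ is equivalent to $P^-(\D\psi-\rho\cosh(u)\psi)=0$, so the identity above also yields $P^-(\D\varphi-\rho\cosh(u)\varphi)=0$. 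Since $\varphi\in H^{\frac{1}{2},-}(\Sigma M)$, the $H^{-1/2}$--$H^{1/2}$ pairing of any spinor with vanishing negative spectral mass against $\varphi$ is zero, whence
$$0=\int_M\Abracket{\D\varphi-\rho\cosh(u)\varphi,\varphi}\dv_g=\int_M\Abracket{\D\varphi,\varphi}\dv_g-\rho\int_M\cosh(u)|\varphi|^2\dv_g.$$

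A sign analysis finishes the job. Expanding $\varphi=\sum_{j<0}a_j\Psi_j$ on the negative eigenspinor basis, the first integral equals $\sum_{j<0}\lambda_j|a_j|^2\le 0$, while $\rho>0$ together with $\cosh(u)\ge 1$ makes the second integral $\ge\rho\|\varphi\|_{L^2}^2\ge 0$. The displayed equality therefore forces $\|\varphi\|_{L^2}=0$, hence $\varphi=0$. Substituting back into \eqref{eq:EL:constrained:u} and \eqref{eq:EL:constrained:psi} produces exactly the two equations of \eqref{eq:SShG-intro}.

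The main technical point to be careful about is that $\psi$ and $\varphi$ only lie in $H^{1/2}$, so the duality manipulations---verifying that $\cosh(u)\varphi\in L^2$, interpreting $\Abracket{\D\varphi,\varphi}$ as the natural $H^{-1/2}$--$H^{1/2}$ pairing, and checking that $P^-(1+|\D|)^{-1}w=0$ truly encodes the vanishing of the negative spectral mass of $w\in H^{-1/2}$---all rely on the Moser--Trudinger and fractional Sobolev embeddings recalled in Section~\ref{sec:preliminaries}. None of these steps is deep, but they are essential to promote the short sign argument above into a rigorous proof.
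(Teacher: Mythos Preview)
Your proof is correct and follows essentially the same route as the paper's Lemma~\ref{lemma:naturality}: both test \eqref{eq:EL:constrained:psi} against the Lagrange multiplier $\varphi\in H^{1/2,-}$, use the constraint $(u,\psi)\in N_\rho$ to kill the $\psi$-term, and conclude $\varphi=0$ from the sign of $\int_M\Abracket{\D\varphi,\varphi}\dv_g\le 0$ together with $\rho\int_M\cosh(u)|\varphi|^2\dv_g\ge 0$. The only cosmetic difference is that you first apply $P^-$ to the equation and then pair, whereas the paper pairs directly and invokes the constraint on the first integral; these are the same computation read in two orders.
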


 \begin{proof}[Proof of Lemma~\ref{lemma:naturality}]
  This follows from testing  equation~\eqref{eq:EL:constrained:psi} against~$\varphi$, which is admissible, so that 
  \begin{align}
   \int_M \Abracket{\D\psi-\rho\cosh(u)\psi,\varphi}\dv_g 
   -\int_M\Abracket{\D\varphi-\rho\cosh(u)\varphi,\varphi}\dv_g =0.
  \end{align}
  Since~$(u,\psi)\in N_\rho$, the first integral vanishes; meanwhile the second integral is equivalent to~$\|\varphi\|_{H^{1/2}}^2$: 
  \begin{align}
   0=\int_M\Abracket{\D\varphi-\rho\cosh(u)\varphi,-\varphi}\dv_g\le -C_1\|\varphi\|^2_{H^{1/2}} -\rho\int_M\cosh(u)|\phi|^2\dv_g\le 0. 
  \end{align}
  Thus we conclude that~$\varphi=0$. 
 \end{proof}

Another ingredient for the min-max procedure is the Palais--Smale condition. 
This is not valid for the special values~$\rho\in \Spect(\D)$: indeed, if~$\rho=\lambda_k\in\Spect(\D)$, then~$(0,t\Psi_k)$ form a $(PS)_0$ sequence while~$t\in \R$ can be arbitrarily large.
Actually it is a sequence of solutions which is unbounded as~$t\to\infty$.
Fortunately we can verify it as long as~$\rho\notin\Spect(\D)$.    
We are inspired here by some results in~\cite{bartsch2006solutions, maalaoui2017characterization, struwe2008variational}.  

\begin{prop}   
 For~$\rho\notin\Spect(\D)$ and~$\rho >0$, the functional~$J_\rho|_{N_\rho}$ satisfies the Palais--Smale condition. 
\end{prop}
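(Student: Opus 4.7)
The plan is to follow the standard treatment of PS sequences on Nehari-type manifolds adapted to indefinite Dirac operators, as in \cite{jevnikar2020existence,maalaoui2017characterization,szulkin2009ground}. Let $(u_n,\psi_n)\in N_\rho$ be a PS sequence at level $c$, and let $\varphi_n\in H^{\frac{1}{2},-}(\Sigma M)$ be the associated Lagrange multipliers, so that
\begin{equation}
\dd J_\rho(u_n,\psi_n)-16\,\Abracket{\dd G(u_n,\psi_n),\varphi_n}=o(1)
\end{equation}
in the dual of $H^1(M)\times H^{\frac{1}{2}}(\Sigma M)$. My first step would be to show $\|\varphi_n\|_{H^{1/2}}\to 0$ by reproducing the argument of Lemma~\ref{lemma:naturality} in an approximate form: testing the above identity at $(0,\varphi_n)$ and using $(u_n,\psi_n)\in N_\rho$ to annihilate the $\Abracket{\D\psi_n-\rho\cosh(u_n)\psi_n,\varphi_n}$ term gives a quadratic form in $\varphi_n$ bounded below by $c_1\|\varphi_n\|_{H^{1/2}}^2$ thanks to $\cosh(u_n)\ge 1$. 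Consequently $(u_n,\psi_n)$ satisfies the free Euler--Lagrange system~\eqref{eq:SShG-intro} up to errors vanishing in the respective dual norms.

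Next I would establish uniform boundedness in $H^1\times H^{\frac{1}{2}}$. The algebraic identity
\begin{equation}
J_\rho(u_n,\psi_n)-\tfrac{1}{2}\dd J_\rho(u_n,\psi_n)[0,\psi_n] = \int_M\bigl(|\nabla u_n|^2+4\rho^2\sinh^2(u_n)\bigr)\,\dv_g
\end{equation}
eliminates the indefinite Dirac contribution. Applying Jensen's inequality to the convex function $\cosh(2\cdot)$ converts the bound on $\int\sinh^2(u_n)$ into a bound on the mean $\bar u_n$, whence Poincaré yields $\|u_n\|_{H^1}^2\le C(1+\|\psi_n\|_{H^{1/2}})$. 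For $\psi_n$, I would use the splitting~\eqref{eq:decomp psi wrt rho} to write $\psi_n=\psi_{n,a}^++\psi_{n,b}^++\psi_n^0+\psi_n^-$ and test the approximate Dirac equation $\D\psi_n=\rho\cosh(u_n)\psi_n+o_{H^{-1/2}}(1)$ against sign-adjusted combinations of these components. The spectral gap $\mathrm{dist}(\rho,\Spect(\D))>0$ provides coercivity on the far-from-$\rho$ parts $H^{\frac{1}{2},+}_a$ and $H^{\frac{1}{2},-}$, while on $H^{\frac{1}{2},+}_b\oplus H^{\frac{1}{2},0}$ the uniform bound $\rho\cosh(u_n)\ge\rho$ dominates the free Dirac contribution. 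The cross-coupling errors of the form $\int(\cosh(u_n)-1)\Abracket{\psi_n,\chi}\,\dv_g$ are controlled by Moser--Trudinger combined with the compact embedding $H^{\frac{1}{2}}(\Sigma M)\hookrightarrow L^q(\Sigma M)$ for $q<4$, leading to $\|\psi_n\|_{H^{1/2}}\le C$ and closing the bound on $u_n$.

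Once boundedness is established, extract a weakly convergent subsequence $(u_n,\psi_n)\rightharpoonup(u_\infty,\psi_\infty)$. Rellich compactness, the Moser--Trudinger inequality and the subcritical embedding $H^{\frac{1}{2}}\hookrightarrow L^q$ for $q<4$ ensure that the nonlinear terms $\sinh(2u_n)$, $\cosh(u_n)\psi_n$ and $\sinh(u_n)|\psi_n|^2$ converge strongly in the natural dual spaces, so $(u_\infty,\psi_\infty)$ solves~\eqref{eq:SShG-intro}. To upgrade weak convergence to strong convergence, I would apply the same test-function arguments from the boundedness step to the differences $\widehat u_n:=u_n-u_\infty$ and $\widehat\psi_n:=\psi_n-\psi_\infty$, which satisfy the linearized equations with source converging strongly to zero; the coercivity already extracted then yields $\widehat u_n\to 0$ in $H^1$ and $\widehat\psi_n\to 0$ in $H^{\frac{1}{2}}$.

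The main obstacle is the $\psi_n$-boundedness step, where three qualitatively different spectral regimes must be simultaneously controlled: the far-from-$\rho$ parts $H^{\frac{1}{2},+}_a$ and $H^{\frac{1}{2},-}$ where the spectral gap provides definite sign, the near-$\rho$ part $H^{\frac{1}{2},+}_b$ where the Dirac form has the opposite sign but small magnitude, and, most delicately, the harmonic kernel $H^{\frac{1}{2},0}$ on which $\D$ vanishes entirely. On this last piece the crucial positivity comes from the lower bound $\rho\cosh(u_n)\ge\rho$, which is precisely the feature absent from the super Liouville setting of \cite{jevnikar2020existence} and which allows the present treatment to dispense with the restriction $\ker\D=0$.
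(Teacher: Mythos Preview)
Your overall architecture is right and matches the paper: kill the Lagrange multiplier by testing against~$\varphi_n$, use the algebraic identity to bound $\int|\nabla u_n|^2+4\rho^2\sinh^2(u_n)$ by $c+o(\|\psi_n\|)$, then attack~$\psi_n$ via the spectral splitting, and finally upgrade weak to strong convergence. The gap is in the step you describe only vaguely, namely the control of the cross-coupling errors
\[
\int_M (\cosh(u_n)-1)\Abracket{\psi_n,\chi}\,\dv_g
=2\int_M \sinh^2\!\big(\tfrac{u_n}{2}\big)\Abracket{\psi_n,\chi}\,\dv_g,
\qquad \chi\in\{\psi_{n,a}^+,\psi_{n,b}^+,\psi_n^-\}.
\]
Invoking Moser--Trudinger here does not help: at this stage $\|u_n\|_{H^1}^2$ is only bounded by $C(1+\|\psi_n\|)$, so any Moser--Trudinger bound on $\cosh(u_n)-1$ is exponential in $\|\psi_n\|$ and cannot close. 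The natural alternative is a direct H\"older split $\|\sinh^2(u_n/2)\|_{L^2}\|\psi_n\|_{L^4}\|\chi\|_{L^4}$; since $\int\sinh^4(u_n/2)\le\tfrac14\int\sinh^2(u_n)\le C(c+o(\|\psi_n\|))$, this yields a bound of the form $C(c+o(\|\psi_n\|))^{1/2}\|\psi_n\|\,\|\chi\|$, which after summing over the four components gives only $\|\psi_n\|\le C(c+o(\|\psi_n\|))^{1/2}\|\psi_n\|$. This is vacuous unless the level~$c$ happens to be small.

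The missing idea, and the one the paper uses, is to test the \emph{scalar} equation~\eqref{eq:PS:u} against the bounded function $\tanh(u_n/2)$. Because $\tanh$ is bounded with bounded derivative, this is an admissible $H^{-1}\times H^1$ pairing, and it produces the a~priori estimate
\[
\int_M \sinh^2\!\big(\tfrac{u_n}{2}\big)|\psi_n|^2\,\dv_g \le C\big(c+o(1)+o(\|\psi_n\|)\big).
\]
With this in hand one factors $\sinh^2(u_n/2)|\psi_n|=\sinh(u_n/2)\cdot\big(\sinh(u_n/2)|\psi_n|\big)$ and applies H\"older with exponents $(4,2,4)$ to obtain
\[
\Big|\int_M(\cosh(u_n)-1)\Abracket{\psi_n,\chi}\,\dv_g\Big|
\le C\big(c+o(\|\psi_n\|)\big)^{3/4}\|\chi\|_{H^{1/2}},
\]
which is \emph{linear} in $\|\chi\|$ with a sublinear coefficient in $\|\psi_n\|$; combined with the spectral-gap coercivity this closes to $\|\psi_n\|_{H^{1/2}}\le C(c,\rho)$. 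Your last paragraph correctly identifies the treatment of the harmonic kernel as the new feature compared with the super-Liouville case, but note that the paper handles $\psi_n^0$ not by testing against it directly (which would again produce uncontrolled cross terms) but via the crude inequality $\rho\|\psi_n^0\|_{L^2}^2\le\int\rho\cosh(u_n)|\psi_n|^2=\int\Abracket{\D\psi_n,\psi_n}+o(\|\psi_n\|)\le\|\psi_n^+\|^2+o(\|\psi_n\|)$, together with the equivalence of norms on the finite-dimensional kernel.
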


    \begin{proof}
     Let~$c\in\R$ and let~$(u_n,\psi_n)$ be a~$(PS)_c$ sequence, namely verifying 
     \begin{align}\label{eq:PS:level}
      J_\rho(u_n,\psi_n)=
      \int_M |\nabla u_n|^2 + 4\rho^2\sinh(u_n)^2 + 8\Abracket{\D\psi_n-\rho\cosh(u_n)\psi_n,\psi_n} \dv_g \to c, 
    \end{align}
    \begin{align}\label{eq:PS:constraint}
     P^-(1+|\D|)^{-1}(\D\psi_n-\rho\cosh(u_n)\psi_n)=0, 
    \end{align}
    \begin{multline}\label{eq:PS:u}
    -2\Delta u_n + 4\rho^2\sinh(2u_n)-8\rho\sinh(u_n)|\psi_n|^2 +16\rho\sinh(u_n)\Abracket{\psi_n,\varphi_n}=\alpha_n \to 0
     \mbox{ in } H^{-1}(M), 
    \end{multline}
    \begin{align}\label{eq:PS:psi}
    \D\psi_n-\rho\cosh(u_n)\psi_n 
    - (\D\varphi_n-\rho\cosh(u_n)\varphi_n)
    =\beta_n \to 0 
    \quad \mbox{ in } \; H^{-\frac{1}{2}}(\Sigma M), 
    \end{align}
    where~$\varphi_n=\varphi_n(u_n,\psi_n) \in H^{\frac{1}{2},-}(\Sigma M)$ are the corresponding Lagrange multipliers.  
    We need to find a subsequence which converges to a solution. 
    
    Step 1: We first show that~$(u_n,\psi_n)$ is uniformly bounded in~$H^1(M)\times H^{\frac{1}{2}}(\Sigma M)$. 
    This is achieved by using a spectral decomposition and applying suitable test functions to the above equations~\eqref{eq:PS:level}-\eqref{eq:PS:psi}. 
    
    \begin{enumerate}
     \item[(i)] Testing~\eqref{eq:PS:psi} against~$\varphi_n \in H^{\frac{1}{2},-}(\Sigma M)$, and recalling~\eqref{eq:PS:constraint}, we get 
      \begin{align}
       \underbrace{-\int_M \Abracket{\D\varphi_n,\varphi_n}\dv_g}_{\ge C\|\varphi_n\|^2} 
       +\rho\int_M \cosh(u_n)|\varphi_n|^2 \dv_g =\Abracket{\beta_n,\varphi_n}
       =o(\|\varphi_n\|), 
      \end{align}
     which implies that 
    \begin{align}
     \|\varphi_n\| \to 0, & & \mbox{ and } & &
     \rho\int_M \cosh(u_n)|\varphi_n|^2\dv_g \to 0. 
    \end{align}
    
    \item[(ii)] Testing~\eqref{eq:PS:psi} against~$\psi_n$, again using~\eqref{eq:PS:constraint}, we get
    \begin{align}
     \int_M \left( \Abracket{\D\psi_n,\psi_n}-\rho\cosh(u_n)|\psi_n|^2 \right) \dv_g = \Abracket{\beta_n,\psi_n}= o(\|\psi_n\|). 
    \end{align}
    
    \item[(iii)] Substituting the above into~\eqref{eq:PS:level}, we see that 
    \begin{align}\label{eq:PS:level:psi}
     \int_M \left( |\nabla u_n|^2+4\rho^2\sinh(u_n)^2 \right) \dv_g
     +o(\|\psi_n\|) = c+ o(1).
    \end{align}
    Since~$\sinh(t)^2\ge t^2$ for any~$t\in \R$, it follows that 
    \begin{align}
     C(\rho)\|u\|_{H^1}^2 \le \int_M \left( |\nabla u_n|^2 + 4\rho^2 u^2 \right) \dv_g \le c+ o(1) + o(\|\psi_n\|). 
    \end{align}
    
     \item[(iv)]  Testing~\eqref{eq:PS:u} against~$\tanh(u/2)=\frac{\sinh(u/2)}{\cosh(u/2)} \in [-1,1]$, using~$\tanh(t)'=\cosh(t)^{-2}\le 1$, we get
    \begin{align}
     \int_M \left( \frac{|\nabla u_n|^2}{\cosh(u_n/2)^2} \right. 
     & \left. +16\rho^2\cosh(u_n)\sinh(\frac{u_n}{2})^2 
      -16\rho\sinh(\frac{u_n}{2})^2 |\psi_n|^2 
      -32\rho\sinh(\frac{u_n}{2})^2\Abracket{\psi_n,\varphi_n} \right) \dv_g 
      \\
      = & \Abracket{\alpha_n, \tanh(\frac{u_n}{2})}_{H^{-1}\times H^1} = o(1). 
    \end{align}
    Since 
    \begin{align}
     4\cosh(u_n)\sinh(\frac{u_n}{2})^2
     =&2\cosh(u_n)(\cosh(u_n)-1)
       =\cosh(2u_n)+1-2\cosh(u_n) \\
     =&2\sinh(u_n)^2-2\sinh(\frac{u_n}{2})^2 
       \le 2\sinh(u_n)^2, 
    \end{align}
    we get
    \begin{align}
     0\le 16\rho^2\cosh(u_n)\sinh(\frac{u_n}{2})^2 \le 8\rho^2\sinh(u_n)^2. 
    \end{align}
    Using 
    \begin{align}
     \int_M 32\rho\sinh(\frac{u_n}{2})^2\Abracket{\psi_n,\varphi_n} \dv_g 
     \le & 16\rho\int_M \sinh(\frac{u_n}{2})^2|\psi_n|^2\dv_g \\  
       &+ 16\rho\int_M \sinh(\frac{u_n}{2})^2|\varphi_n|^2\dv_g 
    \end{align}
    with
    \begin{align}
     16\rho\int_M \sinh(\frac{u_n}{2})^2|\varphi_n|^2\dv_g 
     \le 8\rho \int_M \cosh(u_n)|\varphi_n|^2\dv_g =o(1),
    \end{align} 
    we obtain 
    \begin{align}
     \int_M 32\rho\sinh(\frac{u_n}{2})^2 |\psi_n|^2 
     \le  &\int_M  \left( \frac{|\nabla u_n|^2}{\cosh(u_n/2)^2} 
      +16\rho^2\cosh(u_n)\sinh(\frac{u_n}{2})^2 \right) 
      \dv_g  +o(1) \\
      \le & \int_M \left( |\nabla u_n|^2 + 8\rho^2\sinh(u_n)^2 \right) \dv_g 
           +o(1) \\
      \le & 2c+ o(1) + o(\|\psi_n\|).
    \end{align}
     
    \item[(v)] Up to now it remains to estimate~$(\|\psi_n\|)_{n\ge 1}$.
    We will split this into two cases:~$0<\rho<\lambda_1$ and~$\lambda_k<\rho<\lambda_{k+1}$ for some~$k\ge 1$. 
    
    \
    
    \begin{enumerate}
     \item[(v-1)] Consider the case~$\rho\in (0, \lambda_1)$ and write each~$\psi_n$ as 
     \begin{align}
      \psi_n= \psi_n^+ +\psi_n^0 + \psi_n^-.  
     \end{align}
      Testing~\eqref{eq:PS:psi} against~$\psi_n^+$ we obtain: 
    \begin{align}  
     \int_M & \Abracket{\D\psi-\rho\psi,\psi_n^+} \dv_g \\
     = &\int_M \left( \rho(\cosh(u_n)-1)\Abracket{\psi_n,\psi_n^+}
       -\Abracket{\D\varphi_n-\rho\cosh(u_n)\varphi_n,\psi_n^+} \right) \dv_g 
       +\Abracket{\beta_n,\psi_n^+}_{H^{-\frac{1}{2}}\times H^{\frac{1}{2}}}\\
     =&\int_M 2\rho\sinh(\frac{u_n}{2})^2\Abracket{\psi_n,\psi_n^+} \dv_g 
             + o(\|\psi_n^+\|)  \\
     \le& \parenthesis{\int_M \frac{\rho^2}{16}\sinh(\frac{u_n}{2})^4 \dv_g}^{\frac{1}{4}} 
     \parenthesis{\int_M 16\rho\sinh(\frac{u_n}{2})^2 |\psi_n|^2v \dv_g}^{\frac{1}{2}}
     \parenthesis{\int_M |\psi_n^+|^4 \dv_g}^{\frac{1}{4}}
     + o(\|\psi_n\|). 
    \end{align}
    Since
    \begin{align}
     4\sinh(\frac{u_n}{2})^4
     =\parenthesis{2\sinh(\frac{u_n}{2})^2 }^2 
     =\sinh(u_n)^2 - 4\sinh(\frac{u_n}{2})^2 
     \le \sinh(u_n)^2, 
    \end{align}
    we get  
    \begin{align}
     \int_M & \Abracket{\D\psi-\rho\psi,\psi_n^+} \dv_g
     \le C\parenthesis{c+ o(1) + o(\|\psi_n\|) }^{\frac{1}{4}+\frac{1}{2}} \|\psi_n^+\|_{H^{1/2}} 
      + o(\|\psi_n\|).
    \end{align}
    Note that, 
    \begin{align}
     \int_M \Abracket{\D\psi_n,\psi_n^+} \dv_g
     \ge \frac{\lambda_1}{\lambda_1+1} \|\psi_n^+\|^2_{H^{1/2}},
    \end{align}
    hence  if $ \rho<\lambda_1$,
    \begin{align}
     \int_M \Abracket{\D\psi-\rho\psi_n,\psi_n^+} \dv_g
     \ge \parenthesis{1-\frac{\rho}{\lambda_1}}\frac{\lambda_1}{\lambda_1+1} \|\psi_n^+\|^2_{H^{1/2}}. 
    \end{align}
    Therefore, in the case~$\rho<\lambda_1$, it follows that 
    \begin{align}
     \parenthesis{1-\frac{\rho}{\lambda_1}}\frac{\lambda_1}{\lambda_1+1} \|\psi_n^+\|^2_{H^{1/2}}
     \le C\parenthesis{c+o(1)+o(\|\psi_n\|)}^{\frac{3}{4}} \|\psi_n^+\|_{H^{1/2}}
     + o(\|\psi_n\|). 
    \end{align}
    
     The negative parts~$\psi_n^-$ can be similarly estimated using~\eqref{eq:PS:constraint}
    \begin{align}
     -\int_M \Abracket{\D\psi_n-\rho\psi_n,\psi_n^-}
     = & -\rho\int_M (\cosh(u_n)-1)\Abracket{\psi_n,\psi_n^-}\dv_g \\
     \le & C\parenthesis{c+o(1)+o(\|\psi_n\|)}^{\frac{3}{4}} \|\psi_n^-\|_{H^{1/2}}. 
    \end{align}
    Recall that~$\lambda_{-1} = -\lambda_1$, and hence 
    \begin{align}
     -\int_M \Abracket{\D\psi_n-\rho\psi_n,\psi_n^-}
     \ge  \parenthesis{1+\frac{\rho}{\lambda_1}}\frac{\lambda_1}{\lambda_1+1} \|\psi_n^-\|^2_{H^{1/2}}
     \ge \frac{\lambda_1}{\lambda_1+1} \|\psi_n^-\|^2_{H^{1/2}}.
    \end{align}
    So,  we get an estimate for~$\psi_n^-$ without tail terms which is also independent of~$\rho$:
    \begin{align}
     \frac{\lambda_1}{\lambda_1+1} \|\psi_n^-\|^2_{H^{1/2}}
     \le C\parenthesis{c+o(1)+o(\|\psi_n\|)}^{\frac{3}{4}} \|\psi_n^-\|_{H^{1/2}}. 
     \end{align}
     As for the harmonic parts~$\psi^0_n$, since they are orthogonal to~$\psi_n^+ + \psi_n^-$ with respect to the~$L^2$ global inner product,  
     we have~$\|\psi_n^0\|_{L^2} \le \|\psi_n^0\|_{L^2}$: we can then use~\eqref{eq:PS:psi} to get
     \begin{align}
      \rho\int_M |\psi_n^0|^2\dv_g 
      \le &\int_M \rho|\psi_n|^2\dv_g  
           \le \int_M \rho\cosh(u_n)|\psi_n|^2 \dv_g \\
        =&\int_M\Abracket{\D\psi_n,\psi_n}\dv_g + o(\|\psi_n\|) \\
        =& \int_M \Abracket{\D\psi_n^+ ,\psi_n^+} \dv_g  
        + \underbrace{\int_M \Abracket{\D\psi_n^-,\psi_n^-}\dv_g }_{\le 0} 
        + o(\|\psi_n\|) \\ 
       \le& \|\psi_n^+\|^2_{H^{1/2}}  + o(\|\psi_n\|). 
     \end{align}
     As the space of harmonic spinors has finite dimension, any two norms on it are equivalent; in particular
     \begin{align}
      \rho\|\psi_n^0\|_{H^{1/2}}^2 
      \le  C\|\psi_n^0\|^2_{L^2} 
      \le C\|\psi_n^+\|^2_{H^{1/2}} + o(\|\psi_n\|). 
     \end{align}     
     Since~$\|\psi_n\|_{H^{1/2}}^2 = \|\psi_n^+\|_{H^{1/2}}^2 + \|\psi_n^-\|_{H^{1/2}}^2 + \|\psi^0\|^2_{H^{1/2}}$, we can add up the above estimates to get
     \begin{align}
      \qquad\qquad  \parenthesis{1-\frac{\rho}{\lambda_1}}\frac{\lambda_1}{\lambda_1+1} \|\psi_n\|^2_{H^{1/2}}
     \le C (1+\frac{1}{\rho})\parenthesis{c+o(1)+o(\|\psi_n\|)}^{\frac{3}{4}} \|\psi_n\|_{H^{1/2}}, 
     + o(\|\psi_n\|)
     \end{align}
     from which it follows that the~$\psi_n$'s are uniformly bounded:
     \begin{align}
      \|\psi_n\|_{H^{1/2}}\le C(c, \rho)<\infty. 
     \end{align}
     As a consequence, the norms~$\|u_n\|_{H^1}$ are also uniformly bounded. 
     
     Therefore, in the case~$0<\rho<\lambda_1$, we see that any~$(PS)_c$ sequence (for some~$c\in \R$) is bounded.
    
    \
    
     \item[(v-2)] Next we deal with the case of~$\rho$  large. Let~$\rho\in (\lambda_k,\lambda_{k+1})$ for some~$k\ge 1$. 
     Accordingly, we decompose the spinors~$\psi_n\in H^{\frac{1}{2}}(\Sigma M)$ as in~\eqref{eq:decomp psi wrt rho}:
     \begin{align}
      \psi_n= \psi^+_{an} + \psi^+_{bn}+\psi^0_n + \psi^-_n.
     \end{align}
     
     Testing~\eqref{eq:PS:psi} against~$\psi^+_{bn}=\sum_{0<j\le k} a_{n,j}\Psi_j$ we obtain:
     \begin{align}
      \qquad \qquad \int_M \Abracket{\D\psi_n,\psi^+_{bn}}\dv_g  
      = &\int_M \rho\cosh(u_n)\Abracket{\psi_n,\psi^+_{bn}}\dv_g \\ 
      & -\int_M \Abracket{\D\varphi_n-\rho\cosh(u_n)\varphi_n,\psi^+_{bn}}\dv_g +\Abracket{\beta_n,\psi^+_{bn}}_{H^{-\frac{1}{2}}\times H^{\frac{1}{2}}}\\
      =&\int_M \rho\cosh(u_n)\Abracket{\psi_n,\psi^+_{bn}}\dv_g + o(\|\psi^+_{bn}\|). 
     \end{align}
     It follows that 
     \begin{align}
      \int_M \rho\Abracket{\psi_n,\psi^+_{bn}}\dv_g 
      -&\int_M \Abracket{\D\psi_n,\psi^+_{bn}}\dv_g \\
      =&\int_M \rho(1-\cosh(u_n))\Abracket{\psi_n,\psi^+_{bn}}\dv_g + o(\|\psi^+_{bn}\|).
     \end{align}
     On one side, 
     \begin{align}
      \int_M \rho & \Abracket{\psi_n,\psi^+_{bn}}\dv_g 
      -\int_M \Abracket{\D\psi_n,\psi^+_{bn}}\dv_g 
         =\sum_{0<j\le k}(\rho-\lambda_j)a_{n,j}^2 \\
      =& \sum_{0<j\le k} \frac{\rho-\lambda_j}{1+\lambda_j} (1+\lambda_j)a_{n,j}^2 
        \ge \frac{\rho-\lambda_k}{1+\lambda_k}\sum_{0<j\le k}(1+\lambda_j) a_{n,j}^2  
        =\frac{\rho-\lambda_k}{1+\lambda_k}\|\psi^+_{bn}\|^2_{H^{1/2}}, 
     \end{align}
while on the other 
     \begin{align}
      \left|\int_M \rho(1-\cosh(u_n))\Abracket{\psi_n,\psi^+_{bn}}\dv_g\right|
      =&\int_M 2\rho\sinh(\frac{u_n}{2})^2|\psi_n| |\psi^+_{bn}| \dv_g \\
      \le & C(c+o(1)+o(\|\psi_n\|))^{\frac{3}{4}}\|\psi^+_{bn}\|_{H^{1/2}}. 
     \end{align}
     Thus we get  
     \begin{align}
      \frac{\rho-\lambda_k}{1+\lambda_k}\|\psi^+_{bn}\|^2_{H^{1/2}}
      \le C(c+o(1)+o(\|\psi_n\|))^{\frac{3}{4}}\|\psi^+_{bn}\|_{H^{1/2}}. 
     \end{align}
     
     Similarly we test~\eqref{eq:PS:psi} against~$\psi^+_{an}$ and subtract~$\int_M \rho\Abracket{\psi_n,\psi^+_{an}}\dv_g$ to get
     \begin{align}
      \int_M\Abracket{\D\psi_n-\rho\psi_n,\psi^+_{an}}\dv_g 
      = \int_M \rho(\cosh(u_n)-1) \Abracket{\psi_n,\psi^+_{an}}\dv_g + o(\|\psi^+_{an}\|).
     \end{align}
     Now, the left-hand side can be bounded from below by 
     \begin{align}
      \int_M\Abracket{\D\psi_n-\rho\psi_n,\psi^+_{an}}\dv_g 
      =\sum_{j>k} (\lambda_j-\rho)a_{n,j}^2
      \ge \frac{\lambda_{k+1}-\rho}{\lambda_{k+1}+1}\|\psi^+_{an}\|_{H^{1/2}}, 
     \end{align}
     while the right-hand side is estimated as before. 
     Hence 
     \begin{align}
       \frac{\lambda_{k+1}-\rho}{\lambda_{k+1}+1}\|\psi^+_{an}\|^2_{H^{1/2}}
       \le C(c+o(1)+o(\|\psi_n\|))^{\frac{3}{4}}\|\psi^+_{an}\|_{H^{1/2}}
       +o(\|\psi^+_{an}\|).
     \end{align}
     With the estimates for~$\psi_n^-$ and~$\psi_n^0$:
     \begin{align}
      \frac{\lambda_1}{\lambda_1+1} \|\psi_n^-\|^2_{H^{1/2}}
     \le C\parenthesis{c+o(1)+o(\|\psi_n\|)}^{\frac{3}{4}} \|\psi_n^-\|_{H^{1/2}}; 
     \end{align}
     \begin{align}
      \rho\|\psi_n^0\|_{H^{1/2}}^2
      \le C\|\psi_n^+\|^2 + o(\|\psi_n\|),
     \end{align}
     obtained in the same way as before, we again come to
     \begin{align}
      C(k, \rho)\|\psi_n\|^2_{H^{1/2}}
       \le C(c+o(1)+o(\|\psi_n\|))^{\frac{3}{4}}\|\psi_n\|_{H^{1/2}}
       +o(\|\psi_n\|), 
     \end{align}
     where
     \begin{align}
      C(k,\rho)= \min\braces{\frac{\rho-\lambda_k}{1+\lambda_k},  \frac{\lambda_{k+1}-\rho}{\lambda_{k+1}+1}, \frac{\lambda_1}{\lambda_1+1},\rho}. 
     \end{align}
     From this it follows that 
     \begin{align}
      \|\psi_n\|_{H^{1/2}} \le C(c,\rho)<\infty.
     \end{align}    
     
    \end{enumerate}    
    
    \end{enumerate}
    Therefore, the sequence~$(u_n,\psi_n)$ is shown to be uniformly bounded in~$H^1(M)\times H^{\frac{1}{2}}(\Sigma M)$. 
    
    Step 2: We extract a subsequence which converges to a smooth solution of~\eqref{eq:SShG}. 
    
    By Banach--Alaoglu's theorem there is a subsequence, still denoted as~$(u_n,\psi_n)$ for simplicity of notation, which converges weakly to, say,~$(u_\infty,\psi_\infty)\in H^1(M)\times H^{\frac{1}{2}}(\Sigma M)$. 
    By the compact embedding theorems due to Moser--Trudinger and to Rellich--Kondrachov, we have the following strong convergence:
    \begin{align}
     e^{au_n}\to e^{au_\infty}, \qquad &\mbox{ in } L^p(M),\qquad \forall p\in [1,\infty),\quad \forall a\in\R; \\
     \psi_n\to\psi_{\infty}, \qquad &\mbox{ in } L^q(\Sigma M), \qquad \forall q \in [1,4). 
    \end{align}
    Consequently,~$\cosh(u_n)\psi_n\to \cosh(u_\infty)\psi_\infty$ in~$L^p(\Sigma M)$ for any~$p<4$ and~$\sinh(u_n)|\psi_n|^2 \to \sinh(u_\infty)|\psi_\infty|^2$ in~$L^q(M)$ for any~$q<2$. 
    These conditions are strong enough to guarantee that~$(u_\infty,\psi_\infty)$ is a weak solution of~\eqref{eq:SShG}. 
    The standard regularity theory then applies to show that it is a smooth solution. 
    In particular,~$(u_\infty,\psi_\infty)\in N_\rho$. 
    
    It remains to show that the differences 
    \begin{align}
     v_n\coloneqq u_n -\ u_\infty, & & 
     \phi_n\coloneqq \psi_n-\psi_\infty
    \end{align}
    converge to~$(0,0)$ strongly in~$H^1(M)\times H^{1/2}(\Sigma M)$. 
    Indeed, 
    \begin{align}
     \Delta v_n=\Delta u_n -\Delta u_\infty 
     =&2\rho^2\parenthesis{\sinh(2u_n)-\sinh(2u_\infty)} 
      -4\rho\parenthesis{\cosh(u_n)|\psi_n|^2-\cosh(u_\infty)|\psi_\infty|^2} \\
      & +8\rho\sinh(u_n)\Abracket{\psi_n,\varphi_n}
      -\alpha_n, 
    \end{align}
    which converges to 0 in~$H^{-1}(M)$. 
    Since~$\|v_n\|_{L^2}\to 0$ as~$n\to \infty$, we conclude that~$v_n\to 0$ strongly in~$H^1(M)$. 
    The argument for the strong convergence for~$\phi_n$ to~$0$ in~$H^{1/2}(\Sigma M)$ is similar.
    
    Finally, since the topology of~$N_\rho$ is induced from that of~$H^1(M)\times H^{1/2}(\Sigma M)$, the (sub)sequence $(u_n,\psi_n)$ also converges to~$(u_\infty,\psi_\infty)$ in~$N_\rho$, verifying the Palais--Smale  condition. 
   \end{proof}

\section{Proof of Theorem~\ref{thm:main thm}: Min-max solutions}\label{sec:minimax solutions}

This section is devoted to the proof of the main theorem. 
We study the local geometry near the trivial solution~$(0,0)\in N_\rho$ and obtain some nontrivial ones. 
We will see that if there are no harmonic spinors, for~$\rho$ small the functional~$J_\rho|_{N_\rho}$ displays some mountain pass structure while if either harmonic spinors are present or~$\rho$ is large,~$J_\rho|_{N_\rho}$ shows a local linking structure. 
This is similar to the phenomena shown for super Liouville equations, see e.g.~\cite{jevnikar2020existence}, where the easier case with no harmonic spinors is considered. 

Note that the energy value of~$(0,0)$ is 
\begin{align}
 J_\rho(0,0)= 0.
\end{align}

\subsection{Local estimates}
Let~$(u,\psi)\in N_\rho\cap B_R(0,0)$ for some~$R>0$, where the distance is measured with respect to the Hilbert norm on~$H^1(M)\times H^{1/2}(\Sigma M)$. 
The constraint condition implies 
\begin{align}
 \int_M \Abracket{\D\psi-\rho\cosh(u)\psi, \psi^-}\dv_g =0, 
\end{align}
which helps to control the negative part~$\psi^-$: 
\begin{align}
 \|\psi^-\|_{H^{1/2}}^2
 \le&  -\int_M\Abracket{\D\psi^-,\psi^-}+\rho\int_M \cosh(u)|\psi^-|^2\dv_g 
 = -\rho\int_M \cosh(u)\Abracket{\psi^++\psi^0,\psi^-}\dv_g  \\
 \le& \rho \|\cosh(u)\|_{L^2}\|\psi^++\psi^0\|_{L^4}\|\psi^-\|_{L^4}
\end{align}
hence 
\begin{align}\label{eq:control of psi-}
 \|\psi^-\|_{H^{1/2}} \le \rho \|\cosh(u)\|_{L^2}\|\psi^++\psi^0\|_{H^{1/2}}. 
\end{align}
We next claim that the following estimate holds 
\begin{align}\label{eq:estimate for e(2u)}
 \|e^{u} - 1 \|_{L^2} 
 \le \| e^{|u|}\|_{L^4} \| u\|_{L^4} \le C(R) \|u\|_{H^1}.   
\end{align}
 Indeed, for each~$x\in M$, there exists~$\theta(x)\in [0,1]$ such that 
 \begin{align}
  |e^{u(x)}- e^0|= e^{\theta(x)u(x)} |u(x)|
  \le e^{|u(x)|} |u(x)|. 
 \end{align}
 By Moser-Trudinger's inequality, 
 \begin{align}
  \|e^{|u|}\|_{L^4}\le C\exp( C\|u\|_{H^1}^2) \le C(R),
 \end{align}
 as long as~$\|u\|_{H^1}\le R$. 
 Then \eqref{eq:estimate for e(2u)} follows.

As a consequence, possibly relabelling $C(R)$
\begin{align}
 \|\cosh(u)-1\|_{L^2} \le C(R)\|u\|_{H^1}. 
\end{align}

Consider now the functional~$J_\rho$, which can be decomposed into three parts:
\begin{align}\label{eq:decomp of J}
 J_\rho(u,\psi)
 =&\int_M \left( |\nabla u|^2+ 4\rho^2 \sinh(u)^2
        +8\Abracket{\D\psi,\psi}-8\rho\cosh(u)|\psi|^2 \right) \dv_g \\
 =&\int_M \left( |\nabla u|^2+ 4\rho^2 \sinh(u)^2
        +8\Abracket{\D\psi-\rho\cosh(u)\psi,\psi^+ +\psi^0} \right) 
        \dv_g \\
 =&\int_M \left( |\nabla u|^2 + 4\rho^2 \sinh(u)^2 \right) \dv_g 
    +\int_M  8\Abracket{\D\psi-\rho\psi, \psi^+ + \psi^0}\dv_g \\
  &\qquad   +8\rho\int_M \Abracket{(1-\cosh(u))\psi,\psi^+ + \psi^0}\dv_g, 
\end{align}
where we have used the constraint condition. The second part can be rewritten as
\begin{align}
\int_M  8\Abracket{\D\psi-\rho\psi, \psi^++\psi^0}\dv_g  
 =\int_M  8\Abracket{\D\psi^+-\rho\psi^+, \psi^+}\dv_g 
   -\int_M 8|\psi^0|^2\dv_g. 
\end{align}
Since~$\sinh(t)^2\ge t^2$, 
\begin{align}
 \int_M \left( |\nabla u|^2 + 4\rho^2 \sinh(u)^2 \right) \dv_g 
 \ge \int_M \left( |\nabla u|^2 + 4\rho^2 |u|^2 \right) \dv_g 
 \ge C(\rho) \|u\|_{H^1}^2. 
\end{align}
As observed, the third integral can be estimated by
\begin{align}
 \left|8\rho\int_M \Abracket{(1-\cosh(u))\psi,\psi}\dv_g\right|
 \le 8\rho\|\cosh(u)-1\|_{L^2} \|\psi\|_{L^4} 
 \le C(R)\|u\|_{H^1} \|\psi\|_{H^{1/2}}^2. 
\end{align}
This is cubic for~$(u,\psi)$ small in the Hilbert space~$H^1(M)\times H^{1/2}(\Sigma M)$, hence the functional~$J_\rho(u,\psi)$ is dominated by the other two terms in~\eqref{eq:decomp of J}.  
Note that we may relabel~$C(R)$ once more. 

To understand the local behavior of the second part in~\eqref{eq:decomp of J}, it is convenient to write~$\psi^+$ and~$\psi^0$ as a combination of eigenspinors
\begin{align}
 \psi^+ =\sum_{j=1}^\infty a_j\Psi_j, & & 
 \psi^0 =\sum_{l=1}^h b_l \Psi_{0,l}.
\end{align}
Then we have 
\begin{align}\label{eq:decomp:second part}
 \int_M 8\Abracket{\D\psi- \rho\psi,\psi}\dv_g 
 =& \sum_{j>0}8(\lambda_j-\rho)a_j^2  
   -\sum_{l=1}^h 8\rho b_l^2. 
\end{align}

\subsection{Mountain pass solutions}\label{sec:mountain pass}
First, we consider the case~$h=0$ (i.e. when there are no harmonic spinors) and~$0<\rho<\lambda_1$. 
We will see that~$J_\rho$ has local mountain pass geometry and thus admits mountain-pass solutions. 

In this case~$\psi=\psi^++\psi^-$, and 
\begin{align}
 \|\psi\|_{H^{1/2}}^2 
 =\|\psi^+\|^2_{H^{1/2}}+\|\psi^-\|^2_{H^{1/2}}
 \le (1+C(R))\|\psi^+\|_{H^{1/2}}^2. 
\end{align}
The second part in~\eqref{eq:decomp of J} becomes 
\begin{align}
 \int_M 8\Abracket{\D\psi- \rho\psi,\psi}\dv_g 
 =&\int_M 8\Abracket{\D\psi^+ -\rho\psi^+,\psi^+}\dv_g  \\
 =& \sum_{j>0}(\lambda_j-\rho) a_j^2 
   =\sum_{j>0}\frac{\lambda_j-\rho}{\lambda_j+1} (\lambda_j+1)a_j^2 \\
 \ge&\frac{\lambda_1-\rho}{\lambda_1+1} \|\psi^+\|_{H^{1/2}}\\
 \ge&\frac{1}{1+C(R)}\frac{\lambda_1-\rho}{\lambda_1 +1}\|\psi\|^2_{H^{1/2}}. 
\end{align}
Therefore, for~$r\equiv \|u\|_{H^1}+\|\psi\|_{H^{1/2}}$ so small that 
\begin{align}
 0<r < \frac{1}{2C(R)}\frac{1}{1+C(R)}\frac{\lambda_1-\rho}{\lambda_1 +1}
\end{align}
we can bound the functional~$J_\rho|_{N_\rho}$ from below by
\begin{align}
 J_\rho(u,\psi)
 \ge & C(\rho)\|u\|^2_{H^1} 
      +\frac{1}{1+C(R)}\frac{\lambda_1-\rho}{\lambda_1 +1}\|\psi\|^2_{H^{1/2}} 
      -C(R)\|u\|_{H^1}\|\psi\|_{H^{1/2}}^2  \\
 \ge & C(\rho)\|u\|^2_{H^1} 
      +\frac{1}{2}\frac{1}{1+C(R)}\frac{\lambda_1-\rho}{\lambda_1 +1}\|\psi\|^2_{H^{1/2}} \\
 \ge &\min\braces{C(\rho),\frac{1}{2}\frac{1}{1+C(R)}\frac{\lambda_1-\rho}{\lambda_1 +1} } \parenthesis{\|u\|^2_{H^1}+\|\psi\|_{H^{1/2}}^2 }.
\end{align}
That is,~$J_\rho|_{N_\rho}$ has a strict local minimum at~$(0,0)$ in~$N_\rho$ and for some small~$r_0$,  in the small neighborhood~$B_{r_0}(0,0)\cap N_\rho$,  one has 
\begin{align}
 J_\rho(u,\psi)\ge C(\rho, \lambda_1)\parenthesis{\|u\|^2_{H^1}+\|\psi\|_{H^{1/2}}^2 }.
\end{align}
On the other hand, we can find a negative level as follows. 
Take a constant function~$u=\bar{u}$ such that 
\begin{align}
 \rho\cosh(\bar{u})>\lambda_1+1,
\end{align}
and~$\psi=s \Psi_1$ with~$s\gg 1$ so that 
\begin{align}
 J_\rho(\bar{u},s \Psi_1)
 =& \int_M \left( 4\rho^2\sinh(\bar{u})^2 + 8(\lambda_1-\rho\cosh(\bar{u})) s^2 |\Psi_1|^2 \right) \dv_g \\
 =&4\rho^2\sinh(\bar{u})^2\Vol(M,g) -8(\rho\cosh(\bar{u})-\lambda_1)s^2<0. 
\end{align}
Since~$N_\rho$ is path-connected, we can find a~$C^1$ path inside~$N_\rho$ connecting~$(0,0)$ to~$(\bar{u},s\Psi_1)$.  
Let~$\Gamma$ be the space of such curves parametrized by the unit interval~$[0,1]$. 
Then the number 
\begin{align}
 c_1\coloneqq \inf_{\alpha\in\Gamma} \sup_{t\in [0,1]} J_\rho(\alpha(t)) >0
\end{align}
is a critical level, which means that we can find a critical point different from~$(0,0)$. 
This is the mountain pass solution  we are looking for.

As explained in the introduction, this is for sure a nontrivial solution on surfaces with negative curvature. 
But in general this might be a semi-trivial solution with~$u=$ constant.    
On the round sphere for example, the solutions give by~$(u=\mbox{const},\psi=\mbox{Killing spinor})$ actually corresponds to the mountain pass solutions here.

\subsection{Linking solutions}\label{sec:linking}
In this subsection we consider either the case~$h>0$ or~$\rho>\lambda_1$. 
As~\eqref{eq:decomp:second part} indicates, near~$(0,0)\in N_\rho$ there are some directions along which the functional~$J_\rho$ decreases. 
We will display a local linking geometry of~$J_\rho|_{N_\rho}$ and thus obtain min-max solutions of linking type. 
Without loss of generality, assume~$\lambda_k<\rho <\lambda_{k+1}$ for some~$k\in\mathbb{N}$, where~$k\ge 1$ if~$h=0$ and~$k$ could also be~$0$ if~$h>0$. 

Recall that we have introduced the decomposition
\begin{align}
 H^{\frac{1}{2}}(\Sigma M)
 = H^{\frac{1}{2},+}_a(\Sigma M)
  \oplus H^{\frac{1}{2},+}_b(\Sigma M)
  \oplus H^{\frac{1}{2},0}(\Sigma M)
  \oplus H^{\frac{1}{2},-}(\Sigma M),
\end{align}
and a  spinor~$\psi\in H^{1/2}(\Sigma M)$ is decomposed accordingly as 
\begin{align}\label{eq:decomposition of spinor}
 \psi =\psi^+_a + \psi^+_b  +\psi^0 + \psi^- .  
\end{align}
Consider the set
\begin{align}
 \sN_\rho 
 \coloneqq \{0\}\times\parenthesis{H^{\frac{1}{2},0} \oplus H^{\frac{1}{2},+}_b} (\Sigma M) 
 \subset H^1(M)\times H^{\frac{1}{2}}(\Sigma M).
\end{align}
This is actually a linear subspace contained inside~$N_\rho$, and 
\begin{align}
 J_\rho(0,\psi)
 =\int_M 8\rho\Abracket{\D\psi-\rho\psi,\psi} < 0 ,\qquad 
 \forall (0,\psi)\in \sN_\rho, \quad \psi\neq 0.
\end{align}
Locally,~$\sN_\rho$ is the negative space of the Hessian~$\Hess(J_\rho)$ at~$(0,0)$.
In principle, since the dimension of~$\sN_\rho$ changes when~$\rho$ runs across an eigenvalue, this would imply that there is a bifurcation phenomena occurring when~$\rho$ is close to the eigenvalues of~$\D_g$, as discussed in~\cite{jevnikar2020existence-sphere}. 
Here we will show that there are solutions for all~$\rho\in (0,\infty)\setminus \Spect(\D_g)$.

For~$\tau>0$ consider the following cone
\begin{align}
 \cC_\tau(\sN_\rho)
 =\braces{ (u,\psi)\in N_\rho 
 \mid  
 \|u\|_{H^1}^2 + \|\psi^-\|^2_{H^{1/2}} + \|\psi^+_a\|^2_{H^{1/2}} 
 < \tau \parenthesis{ \|\psi^+_b\|^2_{H^{1/2}} +\|\psi^0\|^2_{H^{1/2}} } }, 
\end{align}
which increases with respect to~$\tau$. 
In a neighborhood of~$(0,0)$ in~$N_\rho$ but outside the cone~$\cC_\tau(\sN_\rho)$, the functional~$J_\rho$ takes non-negative values. 
More precisely, we have 
\begin{lemma}\label{lemma:coercivity in linking}
 There exist constants~$\tau>1$,~$0<r_0<1$ and~$C>0$ such that,
 \begin{align}
  J_\rho(u,\psi)\ge C(\|u\|^2_{H^1}+\|\psi\|^2_{H^{1/2}}), 
  \qquad 
  \forall 
  (u,\psi)\in (N_\rho\cap B_{r_0}(0,0)) \setminus \cC_\tau(\sN_\rho).
 \end{align}
\end{lemma}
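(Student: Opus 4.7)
\smallskip

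\noindent\emph{Proof plan.} The strategy is to exploit the three-term decomposition of $J_\rho$ in~\eqref{eq:decomp of J}, estimate each piece using the spectral components $\psi = \psi_a^+ + \psi_b^+ + \psi^0 + \psi^-$, and then use the cone condition together with~\eqref{eq:control of psi-} to absorb the indefinite quadratic contributions into the coercive ones. The cubic term $|8\rho\int\Abracket{(1-\cosh u)\psi,\psi^++\psi^0}| \le C(R)\|u\|_{H^1}\|\psi\|_{H^{1/2}}^2$ is already of lower order; the core difficulty is the quadratic analysis.

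First, expanding the middle integral of~\eqref{eq:decomp of J} in the eigenbasis gives
\begin{align}
 8\int_M\Abracket{\D\psi-\rho\psi,\psi^++\psi^0}\dv_g = 8\sum_{j>k}(\lambda_j-\rho)a_j^2 + 8\sum_{0<j\le k}(\lambda_j-\rho)a_j^2 - 8\rho\|\psi^0\|_{L^2}^2.
\end{align}
The first sum dominates $c_1\|\psi_a^+\|_{H^{1/2}}^2$ with $c_1 = 8(\lambda_{k+1}-\rho)/(\lambda_{k+1}+1)>0$; the second is bounded in absolute value by $8(\rho-\lambda_1)\|\psi_b^+\|_{H^{1/2}}^2$; and the harmonic term by $C\rho\|\psi^0\|_{H^{1/2}}^2$, using equivalence of norms on the finite-dimensional space $H^{1/2,0}(\Sigma M)$. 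Together with the pointwise inequality $\sinh(u)^2 \ge u^2$ giving $\int |\nabla u|^2 + 4\rho^2\sinh(u)^2 \ge C(\rho)\|u\|_{H^1}^2$, this yields
\begin{align}
 J_\rho(u,\psi) \ge C(\rho)\|u\|_{H^1}^2 + c_1\|\psi_a^+\|_{H^{1/2}}^2 - C_2\bigl(\|\psi_b^+\|^2 + \|\psi^0\|^2\bigr) - C(R)\|u\|_{H^1}\|\psi\|_{H^{1/2}}^2.
\end{align}

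Second, to control $\|\psi_b^+\|^2 + \|\psi^0\|^2$ outside the cone, I invoke~\eqref{eq:control of psi-}: since $\|\cosh(u)\|_{L^2}\le C(R)$ on $B_R(0,0)$ by Moser--Trudinger, one has $\|\psi^-\|_{H^{1/2}}^2 \le C_0^2\bigl(\|\psi_a^+\|^2+\|\psi_b^+\|^2+\|\psi^0\|^2\bigr)$ with $C_0=C_0(R,\rho)$. Substituting this into the complementary cone inequality $\|u\|^2 + \|\psi^-\|^2 + \|\psi_a^+\|^2 \ge \tau(\|\psi_b^+\|^2 + \|\psi^0\|^2)$ and rearranging yields, for any $\tau>C_0^2$,
\begin{align}
 \|\psi_b^+\|^2 + \|\psi^0\|^2 \le \frac{1}{\tau - C_0^2}\bigl(\|u\|^2 + (1+C_0^2)\|\psi_a^+\|^2\bigr),
\end{align}
and consequently $\|\psi\|_{H^{1/2}}^2 \le C''\bigl(\|u\|^2 + \|\psi_a^+\|^2\bigr)$ outside the cone. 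Plugging both estimates back into the previous lower bound, the coefficients of $\|u\|^2$ and $\|\psi_a^+\|^2$ in $J_\rho$ become $C(\rho) - C_2/(\tau-C_0^2) - C(R)r_0 C''$ and $c_1 - C_2(1+C_0^2)/(\tau-C_0^2) - C(R)r_0 C''$ respectively.

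The main delicacy is the order of parameter choice: one must first fix $\tau$ large (in terms of $C_0$, $C_2$, $c_1$, and $C(\rho)$) so that the terms $C_2/(\tau-C_0^2)$ are absorbed into, say, half of $\min(C(\rho),c_1)$, and \emph{only then} choose $r_0 \le R$ small enough to absorb the cubic error $C(R)r_0 C''$. With these choices both quadratic coefficients are strictly positive, giving $J_\rho(u,\psi) \ge c\bigl(\|u\|_{H^1}^2 + \|\psi_a^+\|_{H^{1/2}}^2\bigr) \ge C\bigl(\|u\|^2 + \|\psi\|^2\bigr)$ on $(N_\rho\cap B_{r_0}(0,0))\setminus\cC_\tau(\sN_\rho)$, which is the claim. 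The structural reason this works --- and why $\tau>1$ is in fact forced --- is that~\eqref{eq:control of psi-} carries no smallness factor in $\|u\|_{H^1}$, so $\psi^-$ cannot be dominated independently by the Nehari constraint: grouping it together with $u$ and $\psi_a^+$ on the large side of the cone is precisely the point.
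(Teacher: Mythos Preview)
Your proof is correct and follows essentially the same route as the paper's: decompose $J_\rho$ via~\eqref{eq:decomp of J}, combine the Nehari bound~\eqref{eq:control of psi-} with the complementary-cone inequality to control $\|\psi^+_b\|^2+\|\psi^0\|^2$ (and hence $\|\psi\|^2$) by $\|u\|^2+\|\psi^+_a\|^2$, and then choose $\tau$ large and $r_0$ small to absorb the negative quadratic and the cubic remainder. The only cosmetic difference is that the paper states the parameter choice as ``$r_0$ small and then $\tau$ large'' whereas you fix $\tau$ first; both orderings work since the $\tau$-dependent constant $C''$ in your cubic estimate stays bounded as $\tau\to\infty$.
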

\begin{proof}
 For~$(u,\psi)\in N_\rho$, we take~$R=1$ in~\eqref{eq:control of psi-} (recall that~$R^2$ is an upper bound of~$\|u\|^2_{H^1}+\|\psi\|^2_{H^{1/2}}$) to get 
 \begin{align}
  \|\psi^-\|^2_{H^{1/2}} \le C\rho^2 \parenthesis{
   \|\psi^+_b\|^2_{H^{1/2}} + \|\psi^+_a\|^2_{H^{1/2}}
   +\|\psi^0\|^2_{H^{1/2}}}. 
 \end{align}
 If, in addition,~$(u,\psi)\notin \cC_\tau(\sN_\rho)$, namely
 \begin{align}
  \|u\|_{H^1}^2 + \|\psi^-\|^2_{H^{1/2}} + \|\psi^+_a\|^2_{H^{1/2}} 
  \ge \tau \parenthesis{ 
  \|\psi^+_b\|^2_{H^{1/2}} + \|\psi^0\|^2_{H^{1/2}} },
 \end{align}
 then 
 \begin{align}\label{eq:control of psi-b}
  (\tau-C\rho^2) \parenthesis{ 
  \|\psi^+_b\|^2_{H^{1/2}} + \|\psi^0\|^2_{H^{1/2}} }
 \le \|u\|^2_{H^1} + (1+C\rho^2)\|\psi^+_a\|^2_{H^{1/2}}. 
\end{align}
Consequently, 
\begin{align}\label{eq:control of psi}
 \|\psi\|_{H^{1/2}}^2 
 =&\|\psi^-\|^2_{H^{1/2}} +\|\psi^0\|^2_{H^{1/2}}
    +\|\psi^+_b\|^2_{H^{1/2}}+  \|\psi^+_a\|^2_{H^{1/2}} \\ 
 \le&  (1+C\rho^2)\parenthesis{ \|\psi^0\|^2_{H^{1/2}} + \|\psi^+_b\|^2_{H^{1/2}} +\|\psi^+_a\|^2_{H^{1/2}} } \\
 \le& (1+C\rho^2)\parenthesis{\frac{1}{\tau-C\rho^2} \big(\|u\|^2_{H^1} + (1+C\rho^2)\|\psi^+_a\|^2_{H^{1/2}} \big) + \|\psi^+_a\|^2_{H^{1/2}} } \\
 \le& \frac{1+C\rho^2}{\tau-C\rho^2}\|u\|^2_{H^1}
    +(1+C\rho^2)\frac{\tau+1}{\tau-C\rho^2}\|\psi^+_a\|^2_{H^{1/2}}. 
\end{align}
In particular, 
\begin{align}\label{eq:equivalent norms}
 \|u\|^2_{H^1} + \|\psi\|^2_{H^{1/2}}
 \le &\frac{\tau+1}{\tau-C\rho^2} \parenthesis{\|u\|^2_{H^1} + (1+C\rho^2)\|\psi^+_a\|^2_{H^{1/2}}},
\end{align}
which means that~$(\|u\|_{H^1}^2+ \|\psi^+_a\|^2_{H^{1/2}} )^{1/2}$ defines a locally equivalent norm. 
Note that for~$\tau$ sufficiently large, the coefficient~$(\tau+1)/(\tau-C\rho^2)$ is close to~$1$. 

Consider the spinorial part
\begin{align}
 \int_M \left( 8\Abracket{\D\psi,\psi} \right. 
 & \left. -8\rho\cosh(u)|\psi|^2 \right) \dv_g 
  =\int_M 8\Abracket{(\D-\rho\cosh(u))\psi,
    \psi^0+\psi^+_b +\psi^+_a}\dv_g \\
 =&\int_M 8\Abracket{(\D-\rho)\psi, \psi^0 + \psi^+_b + \psi^+_a }\dv_g \\
  & +\int_M 8\rho\Abracket{(1-\cosh(u))\psi,\psi^0+ \psi^+_b +\psi^+_a}\dv_g \\
 =&\int_M \left( 8\Abracket{(\D-\rho)(\psi^0+\psi^+_b),(\psi^0+\psi^+_b) } 
         +8\Abracket{(\D-\rho)\psi^+_a, \psi^+_a} \right) \dv_g \\
  & +\int_M 8\rho\Abracket{(1-\cosh(u))\psi, \psi^0+ \psi^+_b +\psi^+_a}\dv_g. 
\end{align}
According to the decomposition~\eqref{eq:decomposition of spinor}, each of the above summand can be estimated as follows:
\begin{align}
 \int_M 8\Abracket{(\D-\rho)\psi^+_a,\psi^+_a}  \dv_g 
 \ge \frac{\lambda_{k+1}-\rho}{\lambda_{k+1}+1}\|\psi^+_a\|^2_{H^{1/2}}, 
\end{align}
\begin{align}
 \int_M 8\Abracket{(\D-\rho)(\psi^0+\psi^+_b),(\psi^0+\psi^+_b ) } \dv_g 
 \ge -\rho\parenthesis{\|\psi^+_b\|^2_{H^{1/2}}+\|\psi^0\|^2_{H^{1/2}} }, 
\end{align}
and using~\eqref{eq:control of psi},
\begin{align}
 \Big|\int_M 8\rho &\Abracket{(1-\cosh(u))\psi,
  \psi^0+ \psi^+_b +\psi^+_a}\dv_g \Big|\\
 \le &   C\rho \|u\|_{H^1}\|\psi\|_{H^{1/2}}
      \parenthesis{\|\psi^0\|_{H^{1/2}}+\|\psi^+_b\|_{H^{1/2}} +\|\psi^+_a\|_{H^{1/2}}} \\
 \le & C\rho\|u\|_{H^1}\|\psi\|^2_{H^{1/2}} 
 \le C\rho\frac{1+C\rho^2}{\tau-C\rho^2}\|u\|^3_{H^1}
      +C\rho(1+C\rho^2)\frac{\tau+1}{\tau-C\rho^2}\|u\|_{H^1}\|\psi^+_a\|^2_{H^{1/2}}. 
\end{align}
Thus, for~$(u,\psi)\in (N_\rho\cap B_{r_0}(0,0)) \setminus \cC_\tau(\sN_\rho)$
\begin{align}
 J_\rho(u,\psi)
 \ge&\; C(\rho)\|u\|_{H^1}^2 
 +\frac{\lambda_{k+1}-\rho}{\lambda_{k+1}+1}\|\psi^+_a\|^2_{H^{1/2}} 
 -\rho\|\psi^+_b\|^2_{H^{1/2}}-\rho\|\psi^0\|^2_{H^{1/2}}
  - C\rho\|u\|_{H^1}\|\psi\|^2_{H^{1/2}}  \\
 \ge &\; C(\rho)\|u\|_{H^1}^2 
 +\frac{\lambda_{k+1}-\rho}{\lambda_{k+1}+1}\|\psi^+_a\|^2_{H^{1/2}}  
 -\frac{\rho}{\tau-C\rho^2} \parenthesis{\|u\|^2_{H^1} + (1+C\rho^2)\|\psi^+_a\|^2_{H^{1/2}} }\\
 &\; -C\frac{1+C\rho^2}{\tau-C\rho^2}\|u\|^3_{H^1}
      -C(1+C\rho^2)\frac{\tau+1}{\tau-C\rho^2}\|u\|_{H^1}\|\psi^+_a\|^2_{H^{1/2}} \\
 \ge&\; \parenthesis{C(\rho)-\frac{\rho}{\tau-C\rho^2} - C\frac{1+C\rho^2}{\tau-C\rho^2}r_0 } \|u\|^2_{H^1}  \\
 &\; +\parenthesis{ \frac{\lambda_k-\rho}{\lambda_{k+1}+1}- \frac{\rho(1+C\rho^2)}{\tau- C\rho^2} - C(1+ C\rho^2)\frac{\tau+1}{\tau-C\rho^2}r_0 } \|\psi^+_a\|^2_{H^{1/2}}.
\end{align}
It is now clear that we can choose positive constants~$r_0$ small and then~$\tau$ large to make the two coefficients above positive, say, no less than~$C$:
\begin{align}
 J_\rho(u,\psi)
 \ge C\parenthesis{\|u\|^2_{H^1}+\|\psi^+_a\|^2_{H^{1/2}}}.
\end{align}
The conclusion follows from the equivalence of norms in~\eqref{eq:equivalent norms}. 
\end{proof}

For~$r_0$ chosen as in Lemma~\ref{lemma:coercivity in linking}, consider the set 
\begin{align}
 L_1\coloneqq 
 (\p B_{r_0}(0,0)\cap N_\rho)\setminus \cC_\tau(\sN_\rho). 
\end{align}
This is nonempty since~$(0, r_0\Psi_{k+1}) \in L_1$ and it is homeomorphic to a collar of~$\p B_{r_0}(0,0) \cap \sN_\rho^\perp$. 

Next we will find a subset~$L_2$ which links with~$L_1$ and on which the functional attains non-positive values. 
The construction is similar to the one in~\cite{jevnikar2020existence}; we carry it out here for completeness. 
Consider the finite dimensional ball 
\begin{align}
 B_R(\sN_\rho) = B_R(0,0)\cap \sN_\rho 
 =\{(0,\phi) \; | \; \phi=\phi^0+\phi^+_b\; \mbox{ and } \|\phi\|^2_{H^{1/2}}\le R^2\}. 
\end{align}
On the ball~$B_R(\sN_\rho)$ we have 
\begin{align}
 J_\rho(u,\phi)
 =& \int_M \Abracket{(\D-\rho)(\phi^0+\phi^+_b),(\phi^0+\phi^+_b) }\dv_g 
 \le -\frac{\rho-\lambda_k}{\lambda_k+1} \|\phi\|^2_{H^{1/2}} \le 0. 
\end{align}
For any~$(0,\phi)\in \p B_R(\sN_\rho)$, we will construct a piecewise smooth curve starting from~$(0,\phi)$ and coming back to~$(0,-\phi)\in \p B_R(\sN_\rho)$ (and a line segment in~$B_R(\sN_\rho)$ will help to form a closed curve). 
Let 
\begin{align}
 \sigma_1\colon [0,T]\to N_\rho, \qquad 
 \sigma_1(t)\coloneqq (t,\phi + A t \Psi_{k+1}).
\end{align}
Note that~$\sigma_1(0)=(0,\phi)$ and 
\begin{align}
 J_\rho(\sigma_1(t))
 =&\int_M \left[ 4\rho^2 \sinh(t)^2
 + 8\Abracket{(\D-\rho\cosh(t))\phi,\phi} 
   +  8A^2t^2\Abracket{(\D-\rho\cosh(t))\Psi_{k+1},\Psi_{k+1}} \right] \dv_g \\ 
 =& 4\rho^2 \Vol(M) \sinh(t)^2 
  + \int_M 8\Abracket{(\D-\rho\cosh(t))\phi,\phi}\dv_g
  + 8(\lambda_{k+1}-\rho\cosh(t))A^2 t^2 \\
 \le & 4\rho^2\Vol(M)\sinh(t)^2 
        -\frac{\rho-\lambda_k}{\lambda_k+1} R^2 
        +8(\lambda_{k+1}-\rho\cosh(t))A^2 t^2.
\end{align}
The constants~$T$, $A$ and~$R$ are determined in the following order: 
\begin{enumerate}
 \item[(i)] find first a large $T$ such that~$\lambda_{k+1}-\rho\cosh(T)>1$; 
 \item[(ii)]  fix next~$A$ such that 
            \begin{align}
             4\rho^2\Vol(M)\sinh(T)^2 -8A^2 T^2(\rho\cosh(T)-\lambda_{k+1})<0;
            \end{align}
 \item[(iii)] choose then~$R$ large such that 
            \begin{align}
             \frac{\rho-\lambda_k}{\lambda_k+1} R^2 
             >\max_{t\in [0,T]} 4\rho^2\Vol(M)\sinh(t)^2
             +8(\lambda_{k+1}-\rho\cosh(t))A^2 t^2, 
            \end{align}
            which guarantees that~$J_\rho(\sigma_1(t))<0$ for~$t\in [0,T]$. 
\end{enumerate}
After~$T$,~$A$ and~$R$ are determined,  we join~$(T,\phi+ AT\Psi_{k+1})$ to~$(T, -\phi+AT\Psi_{k+1})$ via a linear segment:
\begin{align}
 \sigma_2\colon [0,1] \to N_\rho,\qquad 
 \sigma_2(s)\coloneqq  (T, (1-2s)\phi + AT\Psi_{k+1}),
\end{align}
and turn to follow the path
\begin{align}
 \sigma_3\colon [0,T]\to N_\rho, \qquad 
 \sigma_3(t)\coloneqq  (T-t, -\phi + A(T-t)\Psi_{k+1} )
\end{align}
to arrive at~$(0,-\phi)\in \p B_R(\sN_\rho)$, as desired. 
The above choice of the constants ensures that~$J_\rho<0$ along the path~$\sigma_1\ast\sigma_2\ast\sigma_3$ (the sum of the paths $\sigma_i$, $i = 1, 2, 3$).

If we write 
\begin{align}
 \sigma_4\colon [0,1] \to N_\rho, \qquad 
 \sigma_4(t)\coloneqq (0, (-1+2s)\phi), 
\end{align}
which traces the diameter connecting~$(0,-\phi)$ and~$(0,\phi)$, then the path~$\sigma_1\ast\sigma_2\ast\sigma_3\ast\sigma_4$ is a closed piecewise smooth curve based at~$(0,\phi)\in \p B_R(\sN_\rho)$. 
Letting~$(0,\phi)$ run throughout~$\p B_R(\sN_\rho)$ and collecting all such curves, we see that they knit the boundary of a solid cylinder segment~$\cD$ where
\begin{align}
 \cD=\{(t,\phi + t\Psi_{k+1})\in N_\rho \; \mid \; 0\le t\le T, \; \phi\in \parenthesis{ H^{\frac{1}{2},0} +H^{\frac{1}{2},+}_b}(\Sigma M),\;  \|\phi\|_{H^{1/2}}^2 \le R^2\}.
\end{align}
Let~$L_2\coloneqq \p \cD$, which links~$L_1$: by shrinking~$L_2$ a little we can get a similar picture as if we were in a Hilbert space, where we can rely on the classical theory as in~\cite[Section 8.3]{ambrosetti2007nonlinear} to see that they actually link each other. 

We can then find min-max solutions at the linking level as follows. 
Let~$\Gamma$ be the space of continuous maps~$\alpha\colon \cD\to N$ which fix the boundary~$\p \cD$. 
Since~$\id_\cD$ is such a map,~$\Gamma\neq \emptyset$.
The linking level is defined as 
\begin{align}
 c_1\coloneqq \inf_{\alpha\in\Gamma } \max_{x\in \Gamma} J_\rho(\alpha(x)). 
\end{align}
Note that Lemma~\ref{lemma:coercivity in linking} implies
\begin{align}
 c_1\ge C r_0^2 >0.
\end{align}
The standard theory (see e.g.~\cite{ambrosetti1973dual, ambrosetti2007nonlinear}) now applies to give a nontrivial critical point of~$J_\rho|_{N_\rho}$ at the level~$c$, which are the nontrivial solutions of~$J_\rho$ that we are looking for. 

\

\section{A multiplicity result} \label{sec:mult}

We  obtained a nontrivial min-max solution by the  mountain pass or linking methods, and let us denote it by~$(u^{(1)}, \psi^{(1)})$. 
Since the functional is invariant under the action of~$\mathbb{Z}_2\times \mathcal{J}$, we get a three-dimensional orbit
\begin{align}
 \mathscr{U}_1\coloneqq \{(\sigma u^{(1)}, \mathbf{j}(\psi^{(1)}))\mid \sigma \in \mathbb{Z}_2 = \{\pm1\}, \; \mathbf{j}\in\mathcal{J}\}
\end{align}
which consists of two components, each homeomorphic to a 3-sphere.
All the elements in~$\mathscr{U}_1$ are solutions of~\eqref{eq:SShG}, but they are geometrically the same.
To be consistent, let us denote the trivial solution by~$(u^{(0)},\psi^{(0)})\equiv (0,0)$ and write its (trivial) orbit as~$\mathscr{U}_0=\{(u^{(0)},\psi^{(0)})\}$. 
Our aim here is to prove Theorem~\ref{thm:main thm2} and find a (family of) geometrically distinct solutions from those in~$\mathscr{U}_0\cup\mathscr{U}_1$, by exploiting the~$\mathbb{Z}_2$-symmetry on the function components, i.e. the evenness of the functional in the~$u$-component, and using the \emph{fountain theorem}, see e.g.~\cite{willem1997minimax}. In this section, all the~$\mathbb{Z}_2$-symmetries are referred to the~$u$-component only.  

\

To begin, we first construct a family of \emph{sweepout} functions connecting the constant functions~$+1$ and~$-1$, see Figure~\ref{fig}. 
Embed the surface into some Euclidean space~$\R^{m}$ with the last coordinate function~$x^m$ being a Morse height function such that~$\min_M x^m=0$ and~$\max_M x^m=\pi$.
Let~$f\colon\R^m \to \R$ be a~$2\pi$-periodic function which takes the value~$+1$ on~$\{x^m\in [2k\pi, (2k+1)\pi)\mid k\in\bZ\}$ and~$-1$ otherwise. 
Consider the function~$\tilde{\chi}\colon\R\times M\to \R$ defined by 
\begin{align}
 \tilde{\chi}(\theta,x)= f(x+ (0,\cdots, \theta)), 
\end{align}
which is~$2\pi$-periodic in~$\theta$. 
Note that, aside from~$\theta$, the function~$\tilde{\chi}$ depends only on the last coordinate. 

\begin{lemma}\label{lemma:sweepout}
 Given~$\eps>0$, there exists a smooth function~$\chi\colon \sph^1\times M \to [-1,1]$, where~$\sph^1=\faktor{ \R}{2\pi\bZ}$, such that 
 \begin{enumerate}
  \item[(i)] $\chi(0, \cdot)\equiv 1$,
  \item[(ii)] $\chi(\theta+\pi, \cdot )= -\chi(\theta,\cdot)$, ~$\forall \theta\in \sph^1=\faktor{\R}{2\pi\bZ}$, 
  \item[(iii)] $\Vol(\{-1<\chi(\theta, \cdot)<1\})<\eps$.
 \end{enumerate}
\end{lemma}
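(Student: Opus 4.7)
The plan is to smooth the author's discontinuous function $\tilde\chi$ from the preceding paragraph while preserving its antipodal symmetry and arranging $\chi(0,\cdot)\equiv 1$ exactly. Two smoothings are required: in the $x^m$-direction, to remove the jumps of $f$; and in the $\theta$-direction, to stitch across $\theta=0$ and $\theta=\pi$.

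Fix a parameter $\delta>0$, to be chosen later. Pick a smooth non-increasing cutoff $\phi_\delta\colon\R\to[-1,1]$ with $\phi_\delta\equiv 1$ on $(-\infty,-\delta]$ and $\phi_\delta\equiv -1$ on $[\delta,\infty)$, and a smooth non-decreasing function $\rho\colon[0,\pi]\to[-\pi-2\delta,\pi+2\delta]$ that is constantly $-\pi-2\delta$ on a neighborhood of $0$ and constantly $\pi+2\delta$ on a neighborhood of $\pi$. Then define
\[
 \chi(\theta,x)\coloneqq\begin{cases} \phi_\delta(x^m(x)+\rho(\theta)), & \theta\in[0,\pi], \\ -\phi_\delta(x^m(x)+\rho(\theta-\pi)), & \theta\in[\pi,2\pi], \end{cases}
\]
and extend $2\pi$-periodically to $\sph^1\times M$. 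Property (i) holds because at $\theta=0$ we have $x^m(x)+\rho(0)\le\pi-\pi-2\delta=-2\delta<-\delta$, so $\phi_\delta$ returns $1$. Property (ii) is immediate from the piecewise definition. Smoothness at $\theta=0$ and $\theta=\pi$ follows from the fact that $\chi$ is locally constant in $\theta$ there, as a consequence of the flatness of $\rho$ near the endpoints of $[0,\pi]$.

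The substantive point is (iii). For each $\theta$, the set $\{x\in M:-1<\chi(\theta,x)<1\}$ equals $\{x\in M:x^m(x)\in I_\theta\}$ for some interval $I_\theta\subset\R$ of length $2\delta$. Because $x^m$ is a smooth (Morse) function on a compact surface, the pushforward measure $(x^m)_*\dv_g$ on $\R$ is absolutely continuous with an $L^1$ density (Sard together with the coarea formula on regular values). Uniform absolute continuity of $L^1$ integrals then gives $\sup_{|I|=2\delta}\Vol(\{x^m\in I\})\to 0$ as $\delta\to 0^+$, and choosing $\delta=\delta(\eps)$ small enough yields (iii).

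The main difficulty, and the reason $\tilde\chi$ cannot simply be mollified componentwise, is reconciling (i) with (ii) when the extremes of $x^m$ are attained: any naive smoothing of $f$ along $x^m$ would produce transitional values precisely near the minimum and maximum points of $x^m$, violating (i). The $\rho$-shift trick resolves this by displacing the argument of $\phi_\delta$ entirely into its $+1$ plateau at $\theta=0$ and into its $-1$ plateau at $\theta=\pi$; as $\theta$ sweeps from $0$ to $\pi$ the interface moves across the full range of $x^m$, which is precisely the sweepout geometry needed in the fountain argument of Section~\ref{sec:mult}.
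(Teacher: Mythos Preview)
Your construction is correct and complete. It differs from the paper's proof, which mollifies the step function~$\tilde\chi$ directly by convolution in the~$x^m$-variable. You instead build~$\chi$ piecewise from a smooth transition profile~$\phi_\delta$ composed with a reparametrised shift~$\rho(\theta)$, gluing the two halves of the circle by the antipodal rule.

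The substantive difference is your treatment of property~(i). You correctly observe that a straight mollification of~$\tilde\chi$ produces transitional values near the extremal points of~$x^m$ (where~$x^m=0$ or~$x^m=\pi$), so~$\chi(0,\cdot)\equiv 1$ fails there. Your overshoot trick, sending~$\rho(0)=-\pi-2\delta$ so that~$x^m+\rho(0)\le -2\delta$ on all of~$M$, lands the argument of~$\phi_\delta$ squarely on its~$+1$ plateau and resolves this cleanly. The paper's approach can be repaired with a one-line adjustment (arrange the Morse height to take values in~$[\delta_0,\pi-\delta_0]$ with~$\delta<\delta_0$ before mollifying), but as written it glosses over this endpoint issue. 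Your version is more careful here.

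For~(iii), your appeal to the coarea formula and absolute continuity of~$(x^m)_*\dv_g$ is valid; a slightly lighter route is to note that the pushforward is a finite, compactly supported, non-atomic Borel measure on~$\R$ (level sets of a Morse function have zero area), which already forces~$\sup_{|I|=2\delta}\mu(I)\to 0$. Either way the conclusion holds.
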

Note that (i) and (ii) yield~$\chi(\pi,\cdot)=-1$, while (iii) implies 
\begin{align}
 \int_M \big| |\chi(\theta,x)|- 1\big|^p \dv_g < \eps, \qquad \forall\;  \theta\in \faktor{\R}{2\pi\bZ} \quad \mbox{ and }\quad \forall\; p\in [1,\infty). 
\end{align}
\begin{proof}[Proof of Lemma~\ref{lemma:sweepout}]
 This can be achieved by a standard mollifying procedure. 
 More precisely, let~$\eta\in C^\infty_c( (-\delta,\delta), \R)$ be a smooth bump function with integral~$1$, and take the convolution
 \begin{align}
  \chi(\theta, x) = \int_{\R} \eta(y)\tilde{\chi}(\theta, x^1,\cdots, x^{m-1}, x^m-y)\dd y. 
 \end{align}
 Then~$\chi$ is smooth. 
 Since~$\tilde{\chi}$ satisfies (i) and (ii) a.e., so does~$\chi$. See Figure~\ref{fig} for a schematic picture of the map~$\chi$. 
 As for (iii): the embedding of~$M$ into~$\R^m$ can be taken so that for any~$\theta\in \R$, the subset 
 \begin{align}
  \{x\in M \mid \theta-\delta \le x^m \le \theta+\delta\}
 \end{align}
 has volume less than~$\eps$. 
 Then it suffices to notice that 
 \begin{align}
  \{-1<\chi(\theta,\cdot)<1\} 
  = \{ |\chi(\theta,\cdot) |\neq 1 \} 
  \subset \{\theta-\delta \le x^m \le \theta+\delta\}. 
 \end{align}


\begin{figure}[h]  
\centering
\includegraphics[width=0.7\linewidth]{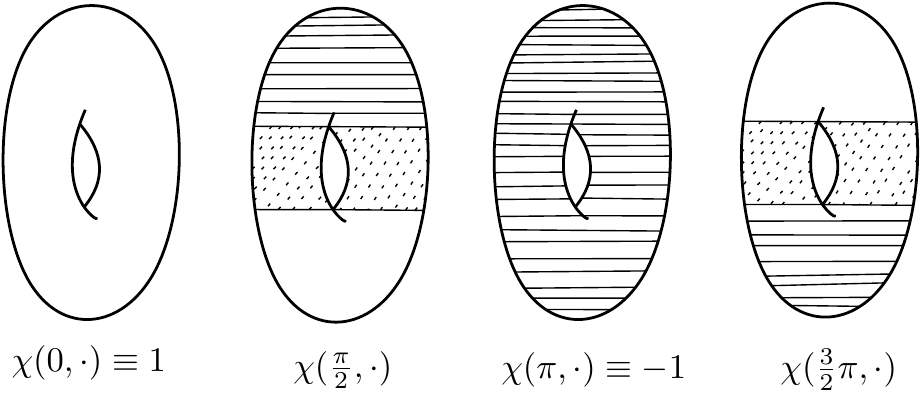}
\caption{The function takes value~$+1$ in the white part, and value~$-1$ on the lined part, while~$-1<\chi<1$ in the dotted part. }
\label{fig}
\end{figure}


\end{proof}

Now for an arbitrary function~$u \in C^\infty(M)$, multiplying by~$\chi$ gives a family~$\{u_\theta\in C^\infty(M) \mid \theta\in \sph^1=\faktor{\R}{2\pi\bZ}\}$ where~$u_\theta(x)= \chi(\theta,x)u(x)$  satisfies~$u_0=u$,~$u_\pi= - u$, and more generally
\begin{align}
 u_{\theta+\pi}(x)= - u_\theta(x). 
\end{align}
Moreover, 
\begin{align}
 \| |u_\theta|-|u|\|_{L^p} <\eps^{\frac{1}{2p}}\|u\|_{L^{2p}}, \qquad \forall \; p\in [1,\infty). 
\end{align}

\ 

To make the argument clearer, we first discuss the case~$h=0$,~$0<\rho<\lambda_1$, and then consider the general case, as in the previous section. 
We will explain the first case in detail and will be rather sketchy in the second case. 

\

\noindent\textbf{Case 1. $h=0$ and~$0<\rho<\lambda_1$.}
In Section~\ref{sec:mountain pass} we have seen that~$(\bar{u}, s\Psi_1) \in N_\rho$ and~$J_\rho(\bar{u}, s\Psi_1)<0$, where~$\bar{u}\in \R$ is large (in particular~$\bar{u}>0$), and~$s\in \R$ is also large enough. 
For each~$\theta\in \faktor{\R}{2\pi\bZ}$, set
\begin{align}
 u_\theta (x) \coloneqq \chi(\theta,x) \bar{u}, \qquad x\in M. 
\end{align}
Next, we need to find~$\psi_\theta$ such that~$\psi_0=s\Psi_1$,~$\psi_{\theta+\pi}= \psi_\theta$, and 
\begin{align}
 (u_\theta,\psi_\theta)\in N_\rho, & & 
 J_\rho(u_\theta,\psi_\theta)\le 0, & & \forall \; \theta\in \faktor{\R}{2\pi\bZ}. 
\end{align}
Recall that for any~$(u,\psi)$ the map 
\begin{align}
  H^{\frac{1}{2},-}(\Sigma M) &\to H^{\frac{1}{2},-}(\Sigma M), \\
 \phi &\mapsto P^- (1+|\D|)^{-1} (\D(\psi+\phi)-\rho \cosh(u) (\psi+\phi))
\end{align}
is a local isomorphism since its differential is an isomorphism, see Section~\ref{sec:setting}. 
For~$u_\theta$ and~$s\in \R$ chosen as above, consider the map 
\begin{align}
 F\colon & \sph^1 \times  H^{\frac{1}{2},-}(\Sigma M) \to H^{\frac{1}{2},-}(\Sigma M), \\
  & F(\theta, \phi)= P^-(1+|\D|)^{-1}(\D(s\Psi_1+\phi)-\rho\cosh(u_\theta)(s\Psi_1+\phi)). 
\end{align}
We have~$F(0,0)=0$ since~$(\bar{u}, s\Psi_1)\in N_\rho$, and~$D_2 F(\theta,\phi)$ is an ismorphism. 
By the Implicit Function Theorem, there exists a local neighborhood~$(-\delta, \delta) \subset \sph^1= \faktor{\R}{2\pi\bZ}$ of~$\theta =0$ and a smooth function
\begin{align}
 (-\delta,\delta) \to H^{\frac{1}{2},-}(\Sigma M), \qquad \theta\mapsto \phi_\theta
\end{align}
satisfying~$F(\theta, \phi_\theta)=0$ for each~$\theta\in (-\delta,\delta)$ and~$\phi_0=0$. 
Putting~$\psi_\theta\equiv s\Psi_1+\phi_\theta$, we have~$(u_\theta,\psi_\theta)\in N_\rho$, ~$|\theta|<\delta$. 
Moreover, since~$D_2F(\theta,\phi)$ has bounded operator norm, also bounded away from zero, we can choose a uniform~$\delta>0$ such that for each~$\theta\in \sph^1$ and~$(u_\theta,\psi_\theta)\in N_\rho$, ~$\phi_\theta$ can be defined on the~$\delta$-neighborhood of~$\theta$.
Since~$\phi_\theta$ is smooth in~$\theta\in\sph^1$ and~$\phi_0=0$, we have
\begin{align}
 \|\phi_\theta\|_{H^{1/2}} \le C
\end{align}
for some constant~$C=C(u_\theta)$, which is independent of~$s>0$ since~$D_2 F(u_\theta,\psi_\theta)$ is independent of~$s$. 

Using the continuity method we get a well-defined family~$\{(u_\theta,\psi_\theta)\mid {\theta\in\sph^1} \}$ lying
in~$N_\rho$. 
Note that, since~$(-\bar{u}, s\Psi_1)\in N_\rho$ and~$u_\pi= -\bar{u}$, we must have~$\psi_\pi= s\Psi_1$ and it follows from  uniqueness that~$\psi_{\theta+\pi}=-\psi_\theta$, for any~$\theta\in\sph^1$. 

Now we get a set~$S\equiv \{(u_\theta,\psi_\theta)\mid {\theta\in\sph^1} \} \cong \sph^1$ in~$N_\rho$ which is homeomorphic to the circle~$\sph^1$.  
For each~$\theta$, 
\begin{align}
 J_\rho(u_\theta,\psi_\theta)
 =&\int_M \left[ |\nabla u_\theta|^2 + 4\rho^2\sinh(u_\theta)^2 
 +8\Abracket{\D\psi_\theta -\rho\cosh(u_\theta)\psi_\theta,\psi_\theta} \right] \dv_g \\
 =& \int_M \left[ \bar{u}^2|\nabla\chi(\theta,\cdot)|^2 + 4\rho^2\sinh(u_\theta)^2 
  +8 \Abracket{(\D-\rho\cosh(u_\theta))\psi_\theta, s\Psi_1} \right] \dv_g  \\
 =& \bar{u}^2\int_M \ |\nabla \chi(\theta,\cdot)|^2 \dv_g  
   +4\rho^2 \int_M \sinh(u_\theta)^2\dv_g 
   +8s^2\int_M \Abracket{(\D-\rho\cosh(\bar{u}))\Psi_1,\Psi_1} \dv_g \\
   &+8\rho\int_{\{ |u_\theta| \neq \bar{u}\}}\parenthesis{\cosh(\bar{u})-\cosh(u_\theta)}\Abracket{s\Psi_1+\phi_\theta, s\Psi_1}\dv_g  \\
 \le &\bar{u}^2 \int_M |\nabla\chi(\theta,\cdot)|^2\dv_g 
      + 4\rho^2 \sinh(\bar{u})^2 \Vol(M,g)
      - 8s^2 (\rho\cosh(\bar{u})-\lambda_1)\\ 
     & +8\rho (Cs^2+Cs)\cosh(\bar{u}) \eps^{1/2}, 
\end{align}
where in the last step we  used the inequality  
\begin{align}
 8\rho\int_{\{| u_\theta|\neq \bar{u}\} }& \parenthesis{\cosh(\bar{u})-\cosh(u_\theta)}\Abracket{s\Psi_1+\phi_\theta, s\Psi_1}\dv_g\\
 &\le 8\rho\cosh(\bar{u})\int_{\{ |u_\theta|\neq \bar{u}\}} \left[ s^2|\Psi_1|^2 + s\Abracket{\phi_\theta, \Psi_1} \right] \dv_g \\
 &\le 8\rho\cosh(\bar{u})\Vol(\{|u_\theta|\neq \bar{u}\})^{1/2} \braces{ s^2 \|\Psi_1\|_{L^4}^2
 +s \|\phi_\theta\|_{L^4} \|\Psi_1\|_{L^4}
 }.
\end{align}
Hence,
\begin{align}
 J_\rho(u_\theta,\psi_\theta)
 \le &\bar{u}^2 \int_M |\nabla\chi(\theta,\cdot)|^2\dv_g 
      + 4\rho^2 \sinh(\bar{u})^2 \Vol(M,g) \\
     & -8s^2\parenthesis{\rho\cosh(\bar{u})-\lambda_1-2C\rho\cosh(\bar{u})\eps^{1/2} }.  
\end{align}

Therefore, we could choose~$\bar{u}\gg 1$,~$0<\eps\ll 1$ and~$s\gg 1$ to achieve 
\begin{align}
 J_\rho(u_\theta,\psi_\theta)<0, \qquad \forall \; \theta\in\sph^1. 
\end{align}
Note also that~$S$ is~$\bZ_2$-invariant, where~$\bZ_2=\{\pm1\}$ only acts on the~$u$-components. 

Recall that~$N_\rho$ is contractible and~$\bZ_2$-invariant, in particular it is simply connected. 
Hence we can find a $\bZ_2$-invariant two-dimensional \emph{topological} disk~$\bB\subset N_\rho$ (here we mean a subset of~$N_\rho$ which is diffeomorphic to the standard unit disk~$B_1^2(0)\subset \R^2$) with boundary~$S$.
For example, one can consider a~$\bZ_2$-invariant disk in the space~$H^1(M)$ with boundary~$\{(u_\theta,0)\mid \theta\in\sph^1\}$ and then connect~$(u_\theta,0)$ to~$(u_\theta,\psi_\theta)\in S$ within the fiber~$N_{\rho, u_\theta}$ for each~$\theta\in\sph^2$. 
Therefore, we get a~$\bZ_2$-equivariant map 
\begin{align}
 w\colon B^2_1(0)\to \bB\subset N_\rho.
\end{align}

Consider the collection 
\begin{align}
 \Gamma_2 \coloneqq\{\alpha \in C(B^2_1(0), N_\rho)| \; \alpha \mbox{ is } \bZ_2\mbox{-equivariant }, \; \alpha|_{\p B^2_1(0)}= w|_{\p B^2_1(0)}\}.
\end{align}
This is nonempty since~$w \in\Gamma_2$. 
Then the value defined by
\begin{align}
 c_2 \coloneqq \inf_{\alpha\in\Gamma_2} \max_{ z\in B_1^2(0)} J_\rho(\alpha(z))
\end{align}
is again a critical level, see e.g.~\cite{willem1997minimax}. 
Note that~$c_2$ is finite, since the mountain pass geometry guarantees that it is positive, and~$\max J_\rho\circ w$ is finite.  

Note that for~$z_0= (1,0)\in \p B^2_1(0)\subset \R^2$, ~$w(z_0)= (\bar{u},s\Psi_1)\in N_\rho$. 
If~$\widetilde{\alpha}$ is a curve connecting~$(0,0)$ and~$(\bar{u}, s\Psi_1)$ within~$N_\rho$, then we can use the above procedure to construct a~$\bZ_2$-equivariant~$\alpha\in\Gamma_2$ such that~$\alpha|_{[0,1]\times\{0\}}$ coincides with~$\tilde{\alpha}$. 
Thus~$\max_{B^2_1(0)}\alpha\ge \max_{[0,1]}\widetilde{\alpha}$ and~$c_2\ge c_1$.

If~$c_2>c_1$,  we get another solution~$(u^{(2)},\psi^{(2)})$ which is geometrically distinct from $(u^{(1)},\psi^{(1)})$ since they lie in different energy levels. 
The case~$c_2=c_1$ can be dealt with as follows. 
Consider the subspace of~$H^1(M)$ which is perpendicular to~$u^{(1)}$, and 
\begin{align}
 \widetilde{N}_\rho\coloneqq\braces{ 
  (u,\psi)\in N_\rho \mid \Abracket{ u, u^{(1)}}_{H^1}=0
 }.   
\end{align}
This is a submanifold of~$N_\rho$, which again admits a local mountain pass geometry: locally,~$J_\rho$ grows up around~$(0,0)$ and we can find~$\theta_0\in (0,\pi)$ such that~$(u_{\theta_0}, \psi_{\theta_0}) \in \widetilde{N}_\rho$ with~$J_\rho(u_\theta,\psi_\theta)<0$. 
Indeed, if~$\Abracket{u^{(1)}, u_\theta}_{H^1}=0$ for all~$\theta\in\faktor{\R}{2\pi\bZ}$, then we may take~$\theta_0=0$, so~$(u_0,\psi_0)$ meets our requirement; otherwise there is~$\theta_1$ such that
\begin{align}
 \Abracket{u^{(1)}, u_{\theta_1}}_{H^1}
 =-\Abracket{u^{(1)}, u_{\theta_1+\pi}}_{H^1}>0,  
\end{align}
and the intermediate value theorem implies the existence of~$\theta_0$. 
Applying the mountain pass method we can find a non-zero critical point~$(u^{(2)}, \psi^{(2)})\in\widetilde{N}_\rho$ with critical level~$c'_1$. 
The curves connecting~$(0,0)$ to~$(u_{\pi/2}, \psi_{\pi/2})$ in~$\widetilde{N}_\rho$ can also be extended to a map~$\alpha\in \Gamma_2$: thus~$c'_1\le c_2$. 
However, by definition of~$c_1$, we have~$c'_1\ge c_1$. 
Therefore~$c'_1= c_1$ and~$(u^{(2)}, \psi^{(2)})$ is a true critical point at level~$c_1$ and~$\Abracket{u^{(1)}, u^{(2)}}_{H^1}=0$. 

In any case, we get another family~$\mathscr{U}_2$ of min-max solutions.
 
\

\noindent\textbf{Case 2: $h>0$ or~$\rho>\lambda_1$.}

In Section~\ref{sec:linking} we  found a solid cylinder segment~$\cD\subset N_\rho$ such that~$J_\rho|_{\p \cD}\le 0$.
Let~$K$ denote the dimension of~$\sN_\rho$, so that the solid cylinder~$\cD$ is homeomorphic to~$B^K_1(0)\times [0,1]$. 

Note that the elements on~$\p\cD$ have constant~$u$-components. 
Therefore, we may use the above procedure to obtain a~$\bZ_2$-invariant~$(K+2)$-dimensional set~$\bar{\cD}$, homeomorphic to~$B^K_1(0)\times B^2_1(0)$, whose boundary is~$\p B^K_1(0)\times B^2_1(0) \cup B^K_1(0)\times \p B^2_1(0)$. 
Since this is a compact set, a suitable choice of the parameters~$A,T,R$ and~$\eps$ will guarantee that~$J_\rho |_{\p\bar{\cD}}\le 0$. 
This is done in the same fashion as above and we will omit the details.

Finally, define
\begin{align}
 \Gamma_2 \coloneqq \{\alpha\in C(\bar{\cD}, N_\rho) : 
 \alpha \mbox{ is } \bZ_2\mbox{-equivariant and } \; \alpha_{\p \bar{\cD} }=\id_{\p\bar{\cD}}\}, 
\end{align}
which is nonempty since~$\id_{\bar{\cD}}\in\Gamma_2$.
The value
\begin{align}
 c_2 \coloneqq \inf_{\alpha\in\Gamma_2} \max_{(u,\psi)\in \bar{\cD}}  J_\rho(u,\psi)
\end{align}
is then a critical level, which gives rise to a second nontrivial solution as in the first case. 

\

We conclude this section with the following remark.
\begin{rmk}\label{r:last}
 The~$\bZ_2$-symmetry of the problem gives us the second family of solutions, which was not present in the super Liouville case~\cite{jevnikar2020existence}. 
 One naturally wonders whether there are infinitely many geometrically distinct families of solutions. 
 By taking more general sweepout functions, see e.g. \cite{gasparguaraco2018CV}, one might get another (family of) solutions.
 However, to get infinitely many families seems difficult: we do not control the spinor part very clearly when~$u$ is not constant.  
 Still, we do think there should be infinitely many such families.
 
 Another remark concerns the~$\sph^3$ symmetry (on spinors) of the problem. 
 It would be interesting to exploit this additional symmetry to get extra multiplicity.

\end{rmk}

\end{document}